\newcommand{\mcm}[2]{\text{{\rm lcm}}(#1,#2)}
\newcommand{\mcd}[2]{\text{{\rm gcd}}(#1,#2)}
\newcommand{\zz}{\mathbb{Z}}
\newtheorem{teor}{Theorem}
\newtheorem{prop}[teor]{Proposition}
\newtheorem{lema}[teor]{Lemma}
\newtheorem{defi}[teor]{Definition}
\newtheorem{conj}[teor]{Conjecture}
\newtheorem{cor}[teor]{Corollary}
\newenvironment{demos}[1][\proofname]{\noindent\normalfont{\itshape
#1{:}}\quad\mdseries\ignorespaces}
{{$\Box$}{\vskip\belowdisplayskip}}
\title{On the use of the least common multiple to build a prime-generating recurrence}
\author{Seraf\'{\i}n Ruiz-Cabello}
\begin{document}
\maketitle

\begin{abstract}
 We study a recursively defined sequence which is constructed using the least common multiple. Several authors have conjectured that every term of that sequence is $1$ or a prime. In this paper we show that this claim is connected to a strong version of Linnik's theorem, which is yet unproved. We also study a generalization that replaces the first term by any positive integer. Under this variation now some composite numbers may appear. We give a full characterization for these numbers.
\end{abstract}


\section{Introduction} \label{intro}

In 2003 Matthew Frank introduced the sequence $\{a'_n\}$ defined by
\[
 a'_n = \begin{cases} 7 & \text{for }n=1 \\ a'_{n-1} + \mcd{n}{a'_{n-1}} & \text{for }n\geq 2, \end{cases}.
\]
Computations suggested that the difference between consecutive terms, $b'_n := a'_n - a'_{n-1}$ for $n\geq 2$, was always $1$ or a prime. This result was proved by Rowland \cite{rowland}. Chamizo, Raboso, and the current author proved that the sequence $\{b'_n\}$ contained infinitely many primes \cite{crr} and several interesting variants of this sequence were studied \cite{cloitre}. In 2008 Benoit Cloitre considered
\begin{equation}\label{an}
 a_n = \begin{cases} 1 & \text{for }n=1 \\ a_{n-1} + \mcm{n}{a_{n-1}} & \text{for }n\geq 2, \end{cases} \qquad\text{and}\qquad b_n = \frac{a_n}{a_{n-1}}-1,\,\, n\geq 2.
\end{equation}

It is easy to check that every term of $\{a_n\}$ is nonzero and a multiple of the previous one. Thus the sequence $\{b_n\}$ is well defined and all terms are positive integers. Let us take a look at the first ones:

\

$\{b_n\} = \{$2, 1, 2, 5, 1, 7, 1, 1, 5, 11, 1, 13, 1, 5, 1, 17, 1, 19, 1, 1, 11, 23, 1, 5, 13, 1, 1, 29, 1, 31, 1, 11, 17, 1, 1, 37, 1, 13, 1, 41, 1, 43, 1, 1, 23, 47, 1, 1, 1, 17, 13, 53, 1, 1, 1, 1, 29, 59, 1, 61, 1, 1, 1, 13, 1, 67, 1, 23, 1, 71, 1, 73, 1, 1, 1, 1, 13, 79, 1, 1, 41, 83, 1, 1, 43, 29, 1, 89, 1, 13, 23, 1, 47, 1, 1, 97, 1, 1, 1, 101, 1, 103, 1, 1, 53, 107, 1, 109, 1, 1, 1, 113, 1, 23, 29, 1, 59, 1, 1, 1, 61, 41, 1, 1, 1, 127, 1, 43, 1, 131, 1, 1, 67, 1, 1, 137, 1, 139, 1, 47, 71, 1, 1, 29, 73, 1, 1, 149, 1, 151, 1, 1, 1, 1, 1, 157, 1, 53, 1, 1, 1, 163, 1, 1, 83, 167, 1, 13, 1, 1, 43, 173, 1, 1, 1, 59, 89, 179, 1, 181, 1, 61, 1, 1, 1, 1, 47, 1, 1, 191, 1, 193, 1, 1, 1, 197, 1, 199, 1, 67, 101, $\ldots\}$

\

At first glance we notice two details. It seems that the sequence does not contain any composite number and hence there are just ones and primes. Moreover, it looks like {\it every} prime number appears, except maybe $3$. In consequence, one may conjecture these two facts:
\begin{conj}\label{conj}
 For any $n\geq 2$, $b_n$ is either $1$ or a prime number.
\end{conj}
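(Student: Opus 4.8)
The plan is to first turn the recurrence into a statement purely about $p$-adic valuations and then to identify exactly which arithmetic input is needed. Since $\mcm{n}{a_{n-1}} = n\,a_{n-1}/\mcd{n}{a_{n-1}}$, the recurrence gives $a_n = a_{n-1}\bigl(1 + n/\mcd{n}{a_{n-1}}\bigr)$, so that
\[
 b_n = \frac{n}{\mcd{n}{a_{n-1}}} = \prod_{p\mid n} p^{\max(0,\,v_p(n)-v_p(a_{n-1}))},
\]
where $v_p$ denotes the $p$-adic valuation. Thus $b_n$ is always a divisor of $n$, and it is $1$ or a prime if and only if at most one prime $p\mid n$ satisfies $v_p(a_{n-1})<v_p(n)$ and, for that prime, the gap is exactly one. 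I would prove Conjecture~\ref{conj} by induction on $n$, assuming $b_k\in\{1\}\cup\{\text{primes}\}$ for every $k<n$ and deducing the same for $b_n$ from the two conditions just isolated: call them (A) ``no prime power overshoots $a_{n-1}$ by more than one'' and (B) ``at most one prime overshoots''.

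The second step is to understand how $v_p(a_n)$ grows. From $a_n=a_{n-1}(1+b_n)$ we get $v_p(a_n)=v_p(a_{n-1})+v_p(1+b_n)$, so $v_p(a_n)$ is non-decreasing and strictly increases exactly at those steps $k$ with $b_k\equiv -1 \pmod p$, by the amount $v_p(1+b_k)$. Under the induction hypothesis every $b_k$ is $1$ or a prime, which for odd $p$ forces the increments to come solely from primes $q\equiv -1\pmod p$ occurring as values $b_k=q$. A clean sub-lemma makes this usable: every prime $q\ge 5$ does appear, precisely at step $q$ (indeed $q\mid a_{q-1}$ would require some earlier $b_k\equiv-1\pmod q$ with $b_k\le q-1$, hence $b_k=q-1$, impossible since $q-1$ is composite). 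Consequently $v_p(a_N)$ is governed by the primes $q\equiv -1 \pmod{p^{j}}$ lying below $N$, each contributing $v_p(q+1)\ge j$ to the valuation.

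The third step is to translate (A) and (B) through this mechanism. Condition (A) is binding at $n=p^e$, where it reads $v_p(a_{p^e-1})\ge e-1$; this is guaranteed as soon as a single prime $q\equiv -1\pmod{p^{e-1}}$ lies below $p^e$, or more generally once $e-1$ primes $\equiv -1 \pmod p$ appear before $p^e$ (so large $e$ is easy and the tight case is $e=2$). Condition (B) at a number $n$ with two prime factors $p<q$ fails only if both progressions $\equiv -1 \pmod p$ and $\equiv -1 \pmod q$ avoid the primes below $n$ entirely, and covering the larger prime $q$ is the binding requirement. In every case one is reduced to asserting that the least prime in a progression $\equiv -1 \pmod m$ is smaller than roughly $m^2$; at $e=2$ this is sharp, since $b_{p^2}$ is composite precisely when no prime $q\equiv -1 \pmod p$ satisfies $q<p^2$. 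I would therefore close the induction by invoking a suitably strong form of Linnik's theorem on the least prime in an arithmetic progression.

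The hard part is exactly this last input. The best unconditional Linnik bound places the least prime $\equiv -1 \pmod m$ at $O(m^{5})$, far weaker than the exponent below $2$ the argument demands; only a strong, and presently unproved, version of Linnik's theorem delivers primes early enough in these progressions. I therefore expect this proposal to yield the implication ``strong Linnik $\Rightarrow$ Conjecture~\ref{conj}'' together with the tightness observation above, rather than an unconditional proof: the genuinely number-theoretic obstacle of producing small primes in the progressions $\equiv -1 \pmod{p^{e-1}}$ is where the whole difficulty is concentrated.
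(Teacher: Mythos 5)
The statement is an open conjecture of the paper---no unconditional proof is given there either---and your proposal correctly recognizes this, delivering instead the conditional reduction that a strong Linnik-type input (at least $k$ primes $\equiv -1\pmod{p}$ below $p^{k+1}$, which is the paper's Conjecture~\ref{suff_conj}) implies the claim; this is precisely the paper's Proposition~\ref{suff_cond}. Your mechanism is the same as well: $b_n=n/\gcd(n,a_{n-1})$, the product formula $a_n=\prod_{k\le n}(1+b_k)$, the fact that $b_q=q$ for primes $q\ge 5$, and the resulting growth of $v_p(a_{n-1})$ driven by primes $q\equiv-1\pmod{p}$ below $n$, so the two routes essentially coincide.
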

\begin{conj}\label{conj_3}
 The sequence $\{b_n\}$ contains every prime number other than $3$, which never appears.
\end{conj}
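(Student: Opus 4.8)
The plan is to establish Conjecture~\ref{conj_3} by proving its two halves separately, after rewriting \eqref{an} in a cleaner form. Since $\mcm{n}{a_{n-1}} = n\,a_{n-1}/\mcd{n}{a_{n-1}}$, the recurrence gives $a_n = a_{n-1}\bigl(1+n/\mcd{n}{a_{n-1}}\bigr)$, so that
\begin{equation*}
 b_n = \frac{n}{\mcd{n}{a_{n-1}}}, \qquad a_n = \prod_{k=2}^{n}(1+b_k).
\end{equation*}
Writing $v_p$ for the $p$-adic valuation, the left identity shows that a prime $p$ divides $b_n$ precisely when $v_p(n)>v_p(a_{n-1})$, while the product shows that $n\mapsto v_p(a_n)$ is nondecreasing. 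Since $b_2=2$ we get $3\mid a_2$, hence $3\mid a_m$ for all $m\geq2$; likewise $a_3=6$ makes $a_m$ even for all $m\geq3$.

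For the positive half I would show that $b_p=p$ for every prime $p\geq5$, which together with $b_2=2$ exhibits every prime other than $3$. It is enough to check $p\nmid a_{p-1}$. If instead $p\mid a_{p-1}=\prod_{k=2}^{p-1}(1+b_k)$, then $p\mid 1+b_k$ for some $2\leq k\leq p-1$; but $1\leq b_k\leq k\leq p-1$ forces $1+b_k=p$, i.e. $b_k=p-1$, and $b_k\leq k$ then forces $k=p-1$. Thus $b_{p-1}=p-1$, i.e. $\mcd{p-1}{a_{p-2}}=1$, which is impossible: for $p\geq5$ the integer $p-1$ is even and, since $p-2\geq3$, so is $a_{p-2}$. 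Hence $\mcd{p}{a_{p-1}}=1$ and $b_p=p$.

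For the negative half I would prove the stronger fact that no multiple of $3$ occurs, i.e. $3\nmid b_n$ for every $n$, which by the criterion above is equivalent to $v_3(a_{n-1})\geq v_3(n)$ for all $n$. Because $v_3(a_\cdot)$ is nondecreasing and the least index with $v_3(n)=j$ is $n=3^j$, it suffices to prove $v_3(a_{3^j-1})\geq j$ for every $j\geq1$. Here I would bound $v_3(a_n)$ from below by counting valuation-raising steps: by the previous paragraph $b_p=p$ for each prime $p\equiv2\pmod{3}$, so $1+b_p\equiv0\pmod{3}$ and $v_3$ rises at step $p$; hence $v_3(a_n)\geq\#\{\,p\leq n: p\equiv2\pmod{3}\,\}$. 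Since the number of such primes below $3^j$ already exceeds $j$ (immediate for small $j$, and clear for large $j$ from the density of primes in this class), the desired inequalities follow, giving $b_n\neq3$ for all $n$.

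I expect the delicate point to be precisely this last bound. The positive half is elementary and self-contained, and---pleasantly---seems to need none of the Linnik-type input that Conjecture~\ref{conj} requires; by contrast, excluding the value $3$ forces one to control the growth of $v_3(a_n)$ uniformly and to verify that it never lags behind $v_3(n)$. Making $v_3(a_{3^j-1})\geq j$ fully rigorous and unconditional is the main obstacle: it is where the distribution of primes $\equiv2\pmod{3}$ enters, and the smallest indices would need to be settled by direct computation.
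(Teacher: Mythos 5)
Your proposal is correct and follows essentially the same route as the paper: the positive half is Proposition \ref{prop_p} (your contradiction via the product formula $a_{p-1}=\prod_{k=2}^{p-1}(1+b_k)$ and the parity of $\mcd{p-1}{a_{p-2}}$ is a minor repackaging of the paper's argument, which rests on the same key fact that $2$ divides $\mcd{p-1}{a_{p-2}}$), and the negative half is exactly the paper's reduction of Proposition \ref{prop_3} to the bound $\pi(3^j;3,2)\geq j$, i.e.\ Lemma \ref{pi}. The one step you leave as a sketch---that there are at least $j$ primes $p\equiv 2\pmod{3}$ below $3^j$---is precisely where the paper supplies the effective input your ``density'' remark needs: direct verification for $j\leq 4$ together with Dusart's explicit estimate $\pi(x;3,2)>x/(2\log x)$ for $x\geq 151$, as in \eqref{exp_bnd_2}.
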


As far as we know, there is no proof for Conjecture \ref{conj}, although numerical evidences suggest that it probably holds. On the other hand, M. Schepke proved \cite[Thm.\ 3.10]{markus} that for any prime distinct than $3$, the term $b_p$ is equal to $p$. Hence, the remaining open question for Conjecture \ref{conj_3} is if number $3$ can ever show up in the sequence.

\

In Section \ref{main} we give a full proof for Conjecture \ref{conj_3} (direct consequence of Propositions \ref{prop_p} and \ref{prop_3}) and a sufficient condition (see Proposition \ref{suff_cond}) for Conjecture \ref{conj} that links it to a well-known theorem proved by Linnik. In a general way, Linnik's theorem asserts that there exist positive constants $c$ and $L$ such that the first prime in the arithmetic progression $a, a+d, a+2d, \ldots$ is less than $cd^L$ for any coprime integers $a$ and $d$ with $1\leq a<d$ \cite{linnik}, for some positive constants $c$ and $L$. The best known bound for $L$ is $5$ \cite{xylouris}, and it is conjectured that the theorem is still true for $L=2$ and $c=1$ \cite{heath-brown}. This stronger statement implies that for any prime $p$, the sequence $p-1, 2p-1, \ldots p^2-1$ should contain at least one prime number. If this claim turns out to be false even for a single prime, then Conjecture \ref{conj} would be false too (as a consequence of Proposition \ref{suff_cond}).

In Section \ref{general} we deal with \eqref{an} when $a_1$ is not $1$ but any positive integer. Under this variant, now it is possible to find composite odd numbers on the sequence, depending on the value of $a_1$. We shall give a necessary and sufficient condition for such numbers to appear or not on these sequences (see Theorem \ref{goodbad}). Finally, Section \ref{examples} contains some examples and calculations that support our claims and explain the behavior of the sequence \eqref{an}.


\section{Auxiliary tools and proofs of the main results}\label{main}

As we shall prove, the key tool to explain why Conjecture \ref{conj} seems to hold is the fact that for any prime $p$ and every integer $k\geq 1$, computations suggest that there are at least $k$ primes which are congruent to $-1$ modulo $p$ and less than $p^{k+1}$ (see Section \ref{examples} for further information). And actually this is more than enough. Here and subsequently, $\pi(x;q,a)$ denotes the number of primes which are less than $x$ and congruent to $a$ modulo $q$, for  integers $1\leq a\leq q$.

\begin{conj}\label{suff_conj}
 For any prime $p$ and any integer $k\geq 1$,
 \begin{equation}\label{upper_primes}
  \pi(p^{k+1};p,p-1)\geq k,
 \end{equation}
\end{conj}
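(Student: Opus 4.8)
The plan is to treat \eqref{upper_primes} as a statement about the density of primes in the single arithmetic progression $n\equiv p-1 \pmod p$ and to compare the true count against the requirement $k$. The heuristic supplied by the prime number theorem for arithmetic progressions predicts
\[
 \pi(p^{k+1};p,p-1)\sim\frac{1}{\varphi(p)}\int_2^{p^{k+1}}\frac{dt}{\log t}\sim\frac{p^{k}}{(k+1)\log p},
\]
which exceeds $k$ by an enormous margin for all but the very smallest values of $p$ and $k$. Thus the entire difficulty is one of \emph{uniformity}: we must secure a lower bound valid for every prime $p$ simultaneously, with no ineffective exceptional modulus (a Siegel zero) lurking to ruin a single case.

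First I would isolate the easy regimes. For a fixed prime $p$, the classical prime number theorem for the residue class $p-1$ modulo $p$ gives $\pi(x;p,p-1)\sim\frac{1}{p-1}\,\frac{x}{\log x}$ with an effective error term, so the inequality holds for all sufficiently large $k$, leaving only finitely many pairs $(p,k)$ with $p$ fixed to be verified by direct computation. To handle the remaining cases with $k$ small and $p$ large, the natural tool is the Generalized Riemann Hypothesis, which yields
\[
 \pi(p^{k+1};p,p-1)=\frac{1}{\varphi(p)}\int_2^{p^{k+1}}\frac{dt}{\log t}+O\!\left((k+1)\,p^{(k+1)/2}\log p\right).
\]
A short computation shows that the main term dominates the error as soon as $p^{(k-1)/2}$ outgrows $(k+1)^{2}(\log p)^{2}$; this happens for every $k\geq 2$ once $p$ is large enough, so under GRH the conjecture would follow for all $k\geq 2$ after a finite verification of the small primes.

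The genuine obstacle is the case $k=1$, where we must exhibit a single prime congruent to $-1$ modulo $p$ below $p^{2}$. This is exactly the strong form of Linnik's theorem with $L=2$ and $c=1$ applied to the family of progressions $a=p-1$, $d=p$, the same unproved statement highlighted in the introduction. Crucially, it lies beyond the reach of GRH as well: the best conditional bound for the least prime in a progression to modulus $p$ is of size $p^{2}(\log p)^{2}$, which overshoots the target $p^{2}$ by a logarithmic factor. An unconditional attack would have to produce an honest prime---not merely an almost-prime---in a single arithmetic progression whose modulus $p$ is as large as the square root of the search range $p^{2}$, and no sieve or analytic method presently delivers this for an individual modulus. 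Consequently, any complete proof of \eqref{upper_primes} must either establish the strong Linnik bound for this specific family or furnish a lower-bound count for the class $-1\bmod p$ that current technology does not provide; this is precisely where I expect the argument to stall.
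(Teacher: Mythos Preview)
The statement you were asked to prove is labeled a \emph{Conjecture} in the paper, and the paper offers no proof of it; it is presented precisely as an open hypothesis whose truth would imply Conjecture~\ref{conj} via Proposition~\ref{suff_cond}. So there is no ``paper's own proof'' to compare against. Your write-up is not a proof either, and you say so explicitly: you give the heuristic count, reduce the difficulty to the case $k=1$, identify it with the strong Linnik bound $L=2$, $c=1$ for the progression $-1\bmod p$, and observe that even GRH only yields a least prime of size $p^{2}(\log p)^{2}$, which misses the target. That diagnosis is correct and matches the paper's own discussion in the introduction and in Section~\ref{examples}: the paper likewise singles out the bottom case (where equality is attained for $p=2,5,13$), points to the same conjectural strengthening of Linnik, and supplies numerical evidence (Figure~\ref{RR}, Table~\ref{tablaa}) rather than a proof.

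In short, there is no gap to name because neither you nor the paper claims a proof; your analysis of \emph{why} the conjecture resists proof is accurate and essentially coincides with the paper's commentary. If anything, you go slightly further than the paper by noting that GRH handles $k\geq 2$ up to a finite check, which the paper does not spell out.
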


For $k=1$, there are some cases on which the equality in \eqref{upper_primes} holds. Namely, $2$, $5$ and $13$ seem to be the only ones. For larger values of $p$, the quantity $\pi(p^{k+1};p,p-1)$ grows quickly as suggested by Montgomery's conjecture \cite[\S 13]{monvau}. Apparently, \eqref{upper_primes} is always a strict inequality for $k> 1$ (see Figure \ref{RR} and Table \ref{tablaa} on page \pageref{RR}). It is not possible however to claim that no counterexamples can ever be found.

\begin{prop}\label{suff_cond}
 If Conjecture \ref{suff_conj} holds, then $b_n$ can only be $1$ or the largest prime factor of $n$ for every $n\geq 2$. Hence, Conjecture \ref{suff_conj} implies Conjecture \ref{conj}.
\end{prop}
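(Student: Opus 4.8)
The plan is to first turn the recurrence into an explicit formula for $b_n$. Since $\mcm{n}{a_{n-1}} = n\,a_{n-1}/\mcd{n}{a_{n-1}}$, the definition \eqref{an} gives $a_n = a_{n-1}\bigl(1 + n/\mcd{n}{a_{n-1}}\bigr)$ and hence
\[
 b_n = \frac{a_n}{a_{n-1}} - 1 = \frac{n}{\mcd{n}{a_{n-1}}}.
\]
Reading this prime by prime, $v_p(b_n) = \max\bigl(0,\,v_p(n) - v_p(a_{n-1})\bigr)$ for every prime $p$, where $v_p$ denotes the $p$-adic valuation. Writing $P(n)$ for the largest prime factor of $n$, the assertion $b_n \in \{1, P(n)\}$ will follow as soon as we establish the two families of valuation inequalities: (A) $v_p(a_{n-1}) \ge v_p(n)$ for every prime $p \mid n$ with $p < P(n)$, and (B) $v_{P(n)}(a_{n-1}) \ge v_{P(n)}(n) - 1$. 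Indeed, (A) forces every prime below $P(n)$ to have exponent $0$ in $b_n$, while (B) caps the exponent of $P(n)$ at $1$, leaving $b_n = P(n)^e$ with $e \in \{0,1\}$.

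The heart of the proof is then the lower bound
\[
 v_p(a_m) \ge \pi(m+1; p, p-1) \qquad \text{for every prime } p \text{ and every } m \ge 1,
\]
that is, $v_p(a_m)$ is at least the number of primes $q \le m$ with $q \equiv -1 \pmod p$. To prove it I would use that $a_{m-1} \mid a_m$, so that $v_p(a_m)$ is nondecreasing in $m$ and satisfies the telescoping identity $v_p(a_m) = \sum_{n=2}^m v_p(1 + b_n)$, in which every summand is nonnegative. At the index $n = q$ of a prime $q \equiv -1 \pmod p$ with $q \ne 3$, Schepke's theorem \cite{markus} gives $b_q = q$, whence the summand $v_p(1 + q) = v_p(q+1) \ge 1$; since distinct primes contribute at distinct indices, keeping only these summands yields the bound. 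The lone prime outside the scope of Schepke's theorem is $q = 3$, and $3 \equiv -1 \pmod p$ happens only for $p = 2$; there one checks directly that $b_3 = 1$, so that $v_2(1 + b_3) = v_2(2) = 1$ supplies the missing contribution. This $p = 2$, $q = 3$ anomaly is the only genuine technical wrinkle.

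It remains to feed the two regimes into this bound, and this is precisely where Conjecture \ref{suff_conj} is used. For (A), put $k = v_p(n) \ge 1$; since $p^k$ and $P(n)$ are coprime divisors of $n$ we get $n \ge p^k P(n) > p^{k+1}$, so monotonicity and the bound above give $v_p(a_{n-1}) \ge \pi(n; p, p-1) \ge \pi(p^{k+1}; p, p-1) \ge k = v_p(n)$, the last step being \eqref{upper_primes}. For (B), put $k = v_{P(n)}(n)$; the case $k = 1$ is trivial, while for $k \ge 2$ we have $n \ge P(n)^k$, whence $v_{P(n)}(a_{n-1}) \ge \pi(P(n)^k; P(n), P(n)-1) \ge k - 1$, again by \eqref{upper_primes} (now with $k-1$ in place of $k$). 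Both inequalities hold, so $b_n \in \{1, P(n)\}$, and since $P(n)$ is prime this is exactly Conjecture \ref{conj}. I expect the main obstacle to be the valuation bound together with the bookkeeping around it: the key structural fact is that the first index at which condition (A) demands $v_p(a_{n-1}) \ge k$ necessarily exceeds $p^{k+1}$, which is exactly the threshold at which \eqref{upper_primes} was tailored to apply, so matching these exponents correctly — and disposing of the $p=2$, $q=3$ case — is where the care lies.
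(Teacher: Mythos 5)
Your proof is correct and follows essentially the same route as the paper: both arguments rest on $b_n=n/\mcd{n}{a_{n-1}}$, the product formula $a_{n}=\prod_{k\le n}(1+b_k)$, and counting primes $q\equiv -1\pmod{p}$ below $n$ (via $b_q=q$) to bound the multiplicity of each prime factor of $n$ in $a_{n-1}$, with the largest prime factor allowed to fall short by one because only $n\ge P(n)^k$ rather than $n>P(n)^{k+1}$ is available. Your explicit treatment of the $p=2$, $q=3$ anomaly actually patches a small imprecision in the paper, which invokes Proposition \ref{prop_p} for all the primes $q_l$ even though $q_l=3$ can occur when $p_j=2$ (harmlessly, since $1+b_3=2$ still contributes the needed factor).
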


\begin{prop}\label{prop_p}
 For any prime $p\neq 3$, we have $b_p = p$.
\end{prop}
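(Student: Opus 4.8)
The plan is to reduce the proposition to a single divisibility statement about $a_{p-1}$ and then dispatch that by a parity argument. First I would record a closed form for $b_n$. Writing $\mcm{n}{a_{n-1}} = n\,a_{n-1}/\mcd{n}{a_{n-1}}$ and substituting into the recurrence \eqref{an} gives $a_n = a_{n-1}\bigl(1 + n/\mcd{n}{a_{n-1}}\bigr)$, so that
$$ b_n = \frac{n}{\mcd{n}{a_{n-1}}}, \qquad n\ge 2. $$
In particular $1\le b_n\le n$ for every $n$, and each $a_n$ is a positive integer multiple of $a_{n-1}$; since $a_1=1$, this yields the telescoping factorization $a_n=\prod_{k=2}^{n}(1+b_k)$. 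For a prime $p$ the integer $\mcd{p}{a_{p-1}}$ is either $1$ or $p$, so $b_p = p/\mcd{p}{a_{p-1}}$ equals $p$ exactly when $p\nmid a_{p-1}$ and equals $1$ otherwise. Thus the whole proposition is equivalent to showing $p\nmid a_{p-1}$ for every prime $p\ne 3$, the case $p=2$ being immediate from $b_2=2$.

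Next I would analyze when $p$ divides $a_{p-1}$. By the factorization above and the primality of $p$, we have $p\mid a_{p-1}$ if and only if $p\mid(1+b_k)$ for some $k$ with $2\le k\le p-1$. But for such $k$ the bound $1\le b_k\le k\le p-1$ forces $2\le 1+b_k\le p$, and the only multiple of $p$ in this range is $p$ itself. Hence $p\mid a_{p-1}$ requires $b_k=p-1$ for some $k\le p-1$; and since $b_k=k/\mcd{k}{a_{k-1}}=p-1$ gives $k=(p-1)\,\mcd{k}{a_{k-1}}\ge p-1$, this can only happen for $k=p-1$ with $\mcd{p-1}{a_{p-2}}=1$. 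In summary,
$$ p\mid a_{p-1} \iff b_{p-1}=p-1 \iff \mcd{p-1}{a_{p-2}}=1. $$

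Finally I would kill the right-hand condition by parity. A short induction shows $a_n$ is even for every $n\ge 3$: one checks $a_3=6$, and since $a_{n-1}\mid a_n$, evenness propagates to all later terms. For a prime $p>3$ we have $p-2\ge 3$, so $a_{p-2}$ is even; as $p-1$ is also even, $\mcd{p-1}{a_{p-2}}\ge 2$, contradicting the condition above. Therefore $p\nmid a_{p-1}$, whence $b_p=p$. I do not expect a genuine obstacle, as the argument is elementary; the step that needs care is the reduction in the previous paragraph, where it is essential to use $b_k\le k\le p-1$ to collapse the congruence $b_k\equiv -1\pmod p$ to the single possibility $b_k=p-1$, and then to observe that $b_k=p-1$ pins down $k=p-1$. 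The prime $3$ is precisely where the parity argument breaks, since $a_2=3$ is odd and $\mcd{2}{a_1}=1$, so $b_2=2=p-1$ and hence $3\mid a_2$, giving $b_3=1$ rather than $3$.
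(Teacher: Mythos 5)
Your proof is correct and takes essentially the same route as the paper: both reduce the claim to showing $p\nmid a_{p-1}$, rule out a factor of $p$ entering before step $p-1$ (you via the bound $1+b_k\le k+1$ in the product $\prod_{k}(1+b_k)$, the paper via the second point of Lemma \ref{lema_aux}, which is proved by the same estimate), and then eliminate the last step using that $p-1$ and $a_{p-2}$ are both even, so $\gcd(p-1,a_{p-2})\ge 2$. Your closing observation pinpointing why the argument fails exactly at $p=3$ matches the paper's remark after Lemma \ref{pi}.
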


\begin{prop}\label{prop_3}
 Any positive integer $n$ satisfies $b_n\neq 3$.
\end{prop}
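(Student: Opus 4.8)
The plan is to translate the statement $b_n\neq 3$ into a purely $3$-adic condition on $a_{n-1}$ and then control the power of $3$ dividing the terms of the sequence. Writing $v_3(m)$ for the exponent of $3$ in a positive integer $m$, I would first record the two basic identities $b_n = n/\mcd{n}{a_{n-1}}$ and $a_n = a_{n-1}(1+b_n)$; the latter shows that $v_3(a_n)=v_3(a_{n-1})+v_3(1+b_n)$, so the sequence $\{v_3(a_n)\}$ is non-decreasing. From the first identity, $v_3(b_n)=v_3(n)-\min(v_3(n),v_3(a_{n-1}))$, so $b_n=3$ forces $v_3(b_n)=1$ and hence $v_3(a_{n-1})=v_3(n)-1$. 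Consequently it suffices to prove the clean inequality $v_3(a_{n-1})\geq v_3(n)$ for every $n\geq 2$, equivalently $v_3(a_m)\geq v_3(m+1)$ for all $m\geq 1$.

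I would establish this by strong induction on $m$. Set $a:=v_3(m+1)$; the case $a=0$ is trivial, so assume $a\geq 1$. If $m+1$ is \emph{not} a power of $3$, then $m+1=3^a u$ with $u\geq 2$, and $n':=m+1-3^a=3^a(u-1)$ is a positive multiple of $3^a$ with $n'-1<m$; the inductive hypothesis gives $v_3(a_{n'-1})\geq v_3(n')\geq a$, and monotonicity of $\{v_3(a_k)\}$ yields $v_3(a_m)\geq v_3(a_{n'-1})\geq a$. Thus the whole statement reduces to the pure prime-power case $m+1=3^a$, i.e.\ to proving
\[
 v_3\!\left(a_{3^a-1}\right)\geq a \qquad\text{for all } a\geq 1.
\]

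For this remaining case I would exploit the accumulated increments together with Proposition~\ref{prop_p}. Since $v_3(a_1)=0$, telescoping gives $v_3(a_{3^a-1})=\sum_{k=2}^{3^a-1} v_3(1+b_k)$, and every index $k$ with $b_k\equiv 2\pmod 3$ contributes at least $1$. By Proposition~\ref{prop_p}, for each prime $p\neq 3$ we have $b_p=p$; in particular, for each prime $p\equiv 2\pmod 3$ with $p<3^a$ the term $b_p=p\equiv 2\pmod 3$ contributes a positive amount. Hence $v_3(a_{3^a-1})$ is at least the number of primes $p\equiv 2\pmod 3$ below $3^a$, and it remains to check that this count is at least $a$.

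The main obstacle is precisely this last counting step: I must guarantee at least $a$ primes congruent to $2$ modulo $3$ in the range $[1,3^a)$. For small $a$ this is a direct verification, and for large $a$ it is an enormous overestimate (the true count is of order $3^a/a$) that follows from the distribution of primes in arithmetic progressions, a statement far weaker than the Linnik-type strength discussed in Section~\ref{main}. I expect the delicacy to lie only in phrasing this input in a self-contained and fully rigorous way; the structural heart of the argument is the reduction to prime powers of $3$, after which the conclusion $b_n\neq 3$ is immediate, since $v_3(a_{n-1})\geq v_3(n)$ contradicts the necessary condition $v_3(a_{n-1})=v_3(n)-1$.
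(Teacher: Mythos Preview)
Your argument is correct and follows essentially the same route as the paper: both reduce $b_n\neq 3$ to a $3$-adic divisibility statement, then use Proposition~\ref{prop_p} together with the count $\pi(3^a;3,2)\geq a$ (the paper's Lemma~\ref{pi}, proved there via Dusart's explicit bound). Your strong-induction reduction to the case $m+1=3^a$ is harmless but unnecessary, since monotonicity of $v_3(a_k)$ already gives $v_3(a_m)\geq v_3(a_{3^a-1})$ for every $m$ with $v_3(m+1)=a$; apart from that and the unfinished counting step, your proof matches the paper's.
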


To prove these three Propositions, we shall employ the following Lemma:

\begin{lema}\label{lema_aux}
 Sequences $\{a_n\}$ and $\{b_n\}$ satisfy the following results:
 \begin{enumerate}
  \item $a_{n-1}$ is greater than $n$ for every $n\geq 4$.
  \item If a prime $p$ divides $a_n$ for a given $n\geq 1$, then $p\leq n+1$.
  \item Every term in $\{b_n\}$ can be written as 
  \begin{equation}\label{bn_alt}
  b_n = \frac{1}{a_{n-1}}(a_n-a_{n-1}) = \frac{\mcm{n}{a_{n-1}}}{a_{n-1}} = \frac{n}{\mcd{n}{a_{n-1}}}.
  \end{equation}
  In particular, $b_n$ is always a divisor of $n$.
 \end{enumerate}
\end{lema}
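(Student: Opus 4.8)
The plan is to establish the three statements in the order (3), (1), (2), since the identities in (3) feed directly into the proofs of the other two. For part (3), I would start from the definition $b_n = a_n/a_{n-1} - 1$ and substitute the recurrence $a_n = a_{n-1} + \mcm{n}{a_{n-1}}$ to obtain $b_n = \mcm{n}{a_{n-1}}/a_{n-1}$. The final expression then follows from the classical identity $\mcm{n}{a_{n-1}}\cdot\mcd{n}{a_{n-1}} = n\, a_{n-1}$, which rewrites the least common multiple as $n\, a_{n-1}/\mcd{n}{a_{n-1}}$ and cancels the factor $a_{n-1}$. Since $\mcd{n}{a_{n-1}}$ divides $n$, the quotient $n/\mcd{n}{a_{n-1}}$ is a positive integer dividing $n$, giving the last assertion. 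This part is routine; the only thing to keep in mind is that $\{a_n\}$ is a sequence of positive integers with each term a multiple of the previous one (as noted after \eqref{an}), so all the divisions are legitimate.

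Two observations extracted from (3) will be used repeatedly. First, because $\mcm{n}{a_{n-1}}$ is a multiple of $a_{n-1}$, we have $b_n \geq 1$, and hence $a_n = a_{n-1}(1+b_n) \geq 2a_{n-1}$. Second, $1 + b_n \leq n + 1$, since $b_n$ divides $n$ and is therefore at most $n$.

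For part (1) I would argue by induction on $n$. After checking the base case directly (a short computation gives $a_1=1$, $a_2=3$, $a_3 = 6 > 4$), the inductive step is immediate from the first observation: assuming the statement at index $n$, namely $a_{n-1} > n$, we get $a_n \geq 2a_{n-1} > 2n > n+1$, which is precisely the statement at index $n+1$.

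For part (2) I would again induct on $n$, with the trivial base case $a_1 = 1$ (no prime factors). The subtlety here, and the only genuinely delicate point in the whole Lemma, is that $a_n$ is defined as a \emph{sum}, and the prime factors of a sum are not in general controlled by those of the summands. The device that resolves this is the multiplicative rewriting $a_n = a_{n-1}(1+b_n)$ coming from (3): every prime dividing $a_n$ must divide either $a_{n-1}$ or $1 + b_n$. By the inductive hypothesis every prime factor of $a_{n-1}$ is at most $n$, hence at most $n+1$; and every prime factor of $1+b_n$ is at most $1 + b_n \leq n+1$ by the second observation above. This closes the induction and completes the proof.
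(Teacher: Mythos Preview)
Your proof is correct and follows essentially the same approach as the paper: the doubling inequality $a_n\geq 2a_{n-1}$ together with $a_3>4$ for part~(1), the factorization $a_n=a_{n-1}(1+b_n)$ with $1+b_n\leq n+1$ and induction for part~(2), and the identity $ab=\mcd{a}{b}\cdot\mcm{a}{b}$ for part~(3). The only difference is that you prove (3) first and make the reasoning for (2) more explicit, whereas the paper's version is terser and proceeds in the stated order.
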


\begin{demos}
 The first point follows from the fact that every term of $\{a_n\}$ is at least twice as big as the previous one, and from $a_3>4$. For the second one, it is enough to note $a_{n} = a_{n-1} (1 + \mcm{n}{a_{n-1}}/a_{n-1}) \leq a_{n-1}(1+n)$ and to apply induction. Finally, the last one is obtained from \eqref{an} using $ab=\mcd{a}{b}\cdot\mcm{a}{b}$.
\end{demos}

\begin{proof}[Proof of Proposition \ref{prop_p}]
The case $p=2$ is straightforward. For $p\geq 5$ prime we write $b_p = p/\gcd(p,a_{p-1})$ from \eqref{bn_alt}. It is sufficient to prove that $p$ and $a_{p-1}$ are coprime. Clearly $p$ does not divide $a_{p-2}$ (as a consequence of the second point of Lemma \ref{lema_aux}); on the other hand, since $p-1$ is even, $2$ is a common divisor of it with $a_{p-1}$, because $a_n$ is even for $n\geq 3$. Hence
\[
 a_{p-1} = a_{p-2} + \mcm{p-1}{a_{p-2}} \leq a_{p-2} + \frac{p-1}{2}a_{p-2},
\]
and then $a_{p-1}/a_{p-2} \leq (p+1)/2 < p$. As $p$ does not divide the product $(a_{p-1}/a_{p-2})a_{p-2}$ and is a prime, we conclude that it is coprime with $a_{p-1}$, which completes the proof.
\end{proof}

\begin{proof}[Proof of Proposition \ref{suff_cond}]
 Conjecture \ref{conj} holds for $b_2$ and $b_3$. Let us fix $m\geq 4$ (by Lemma \ref{lema_aux}, this implies $m<a_{m-1}$), and let $p$ be the largest prime factor of $m$. We only need to show that $m/p$ is a divisor of $a_{m-1}$, since in that case $\mcm{m}{a_{m-1}}$ can only be $a_{m-1}$ or $pa_{m-1}$ and thus $b_m$ can only be $1$ or $p$ by \eqref{bn_alt}.
 
 The crucial fact is that every term in the sequence $(a_n)$ is a multiple of the preceding one (in fact, of $a_n'$ for every $n'<n$). From the first equality in \eqref{bn_alt}, it is clear that $a_n = a_{n-1}(b_n+1)$, and in consequence every $a_n$ can be expressed in terms of the sequence $(b_n)$, since by induction we have
 \begin{equation}\label{acumulador}
  a_n = \prod_{k=2}^n(1+b_k),\qquad n\geq 2.
 \end{equation}
 For a fixed integer $m$, we consider its unique prime factorization,
 \[
  m = p_1^{\alpha_1}p_2^{\alpha_2}\cdot\ldots\cdot p_k^{\alpha_k},
 \]
 with $p=p_k$ and (only if $k>1$) $p_1,\ldots,p_{k-1}<p$. For $j<k$, we have $m\geq p_j^{\alpha_j}\cdot p > p_j^{\alpha_{j}+1}$. By \eqref{upper_primes}, there exist at least $\alpha_j$ primes $q_1,q_2,\ldots,q_{\alpha_j}$ less than $p_j^{\alpha_{j+1}}$ (and hence less than $m$) which are congruent with $-1$ modulo $p_j$. For each one of them, Proposition \ref{prop_p} implies $b_{q_l}=q_l$ and therefore $a_{q_l}=a_{q_{l}-1}(1+q_l)$. Since $p_j$ divides $1+q_l$, it must divide $a_{q_l}$ at least one more time than it divides $a_{q_l-1}$. Thus $a_{m-1}$ contains the factor $p_j$ at least $\alpha_j$ times.

The case $p_k$ is very similar, except for the fact that now we cannot guarantee $m\geq p_k^{\alpha_k+1}$ but only $m\geq p_k^{\alpha_k}$. We apply the same argument as above in order to prove that $p_k$ divides $a_{m-1}$ at least $\alpha_k-1$ times and the proof is finished.
\end{proof}

For the Proof of Proposition \ref{prop_3} we also need an explicit lower bound on the number of prime numbers which are congruent to $2$ modulo $3$. Namely we use \cite{dussart}
 \begin{equation}\label{exp_bnd_2}
  \pi(x;3,2) > \frac{x}{2\log x}\qquad\text{for}\quad x\geq 151.
 \end{equation}
\begin{lema}\label{pi}
 For every integer $k\geq 0$, we have
 \begin{equation}\label{k}
  \pi(3^{k};3,2) \geq k.
 \end{equation}
\end{lema}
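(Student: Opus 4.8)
The plan is to split the range of $k$ into a short finite block handled by direct inspection and a tail handled by the explicit lower bound \eqref{exp_bnd_2}. The hypothesis in \eqref{exp_bnd_2} requires $x\geq 151$, and since $3^4=81<151<243=3^5$, that bound becomes available exactly from $k=5$ onward. So I would treat $0\leq k\leq 4$ by hand and $k\geq 5$ by the estimate.

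For the base cases I would use the fact that the primes congruent to $2$ modulo $3$, in increasing order, begin $2,5,11,17,\ldots$. Writing $q_j$ for the $j$-th such prime, one checks $q_1=2<3$, $q_2=5<9$, $q_3=11<27$ and $q_4=17<81$, that is $q_k<3^k$ for $1\leq k\leq 4$. Since $q_1<\cdots<q_k$, the first $k$ such primes all lie below $3^k$, which gives $\pi(3^k;3,2)\geq k$ for these values; the case $k=0$ being trivial as $\pi(1;3,2)=0$.

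For $k\geq 5$ I would apply \eqref{exp_bnd_2} with $x=3^k\geq 243\geq 151$, obtaining
\[
 \pi(3^k;3,2) > \frac{3^k}{2\log(3^k)} = \frac{3^k}{2k\log 3}.
\]
It then suffices to show $\frac{3^k}{2k\log 3}\geq k$, i.e.\ $3^k\geq 2k^2\log 3$. I would verify this at $k=5$ (there $243>2\cdot 25\cdot\log 3\approx 54.9$) and propagate by induction: passing from $k$ to $k+1$ multiplies the left-hand side by $3$ and the right-hand side by $(1+1/k)^2\leq(6/5)^2<3$, so the inequality is preserved for all $k\geq 5$. This yields $\pi(3^k;3,2)>k$, hence in particular the desired $\pi(3^k;3,2)\geq k$.

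There is no genuine obstacle here: the tail is a routine exponential-versus-polynomial comparison, and the head is a finite check. The only points requiring a little care are aligning the threshold $x\geq 151$ of \eqref{exp_bnd_2} with the admissible range $k\geq 5$, and making sure the finitely many values $k\leq 4$ are all covered, which the explicit short list $2,5,11,17$ of small primes $\equiv 2\pmod 3$ settles at once.
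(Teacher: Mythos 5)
Your proof is correct and follows essentially the same route as the paper: a finite check for $0\leq k\leq 4$ followed by the Dussart bound \eqref{exp_bnd_2} for $k\geq 5$, where your induction step for $3^k\geq 2k^2\log 3$ is just a repackaging of the paper's observation that $3^xx^{-2}$ is increasing from $x=5$ on. No issues.
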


\begin{demos}
  It is easy to check that \eqref{k} holds for $0\leq k\leq 4$. For larger values of $k$ we use \eqref{exp_bnd_2} and the fact that $3^xx^{-2}$ is an increasing function from $x=5$ onwards:
 \[
  \pi(3^{k};3,2) > \frac{3^k}{(\log 9)k} \geq \frac{3^5}{(\log 9)5^2}\,k > k.
 \]
\end{demos}

Note that Lemma \ref{pi} actually is a stronger version of \eqref{upper_primes} for $p=3$. This is the only prime such that $p-1$ is also a prime, and that is the reason why $3$ does not appear in $\{b_n\}$.

\

\begin{demos}[Proof of Proposition \ref{prop_3}]
 By \eqref{bn_alt}, it is sufficient to show that $3^k$ divides $a_{3^k}$ for any positive integer $k$. Lemma \ref{pi} implies that there exist primes $p_1,p_2,\ldots p_k$ less than $3^k$ which are congruent to $-1$ modulo $3$. By Lemma \ref{prop_p}, $b_{p_j} = p_j$, and hence $a_{p_j} = a_{p_{j-1}}(p_j + 1)$ is a multiple of $3$. The proof is concluded using that every $a_m$ is a multiple of $a_{m-1}$ and applying induction.
\end{demos}


\section{Generalization} \label{general}

Now a natural question arises: What happens if the initial condition in \eqref{an} is not $1$ but any possitive integer? For a fixed $s\geq 1$ we define
\begin{equation}\label{ans}
 a_n^s = \begin{cases} s & \text{for }n=1 \\ a_{n-1}^s + \mcm{n}{a_{n-1}^s} & \text{for }n\geq 2, \end{cases} \qquad\text{and}\qquad b_n^s = \frac{a_n^s}{a_{n-1}^s}-1,\,\, n\geq 2.
\end{equation}

The relation between differents sequences of this family is surprising (see Table \ref{tabla}). The first thing we notice is that they all seem to be very similar, with little variations that may depend on common divisors of given values of $n$ and $s$. Also, it is clear that Conjecture \ref{conj} cannot be generalized for every $s$. We can see on Table \ref{tabla} that for instance the sequence $\{b_n^{10}\}$ contains a composite number, a $9$. For suitable values of $s$ (that, as we shall discuss later, strongly depend on the composite number $m$ we want to find), many more counterexamples are found. For instance,
\[
 b_{21}^{24310} = 21,\quad b_{25}^{19} = 25,\quad b_{27}^{43010} = 27,\quad b_{35}^{7163} = 35.
\]
See Tables \ref{tablan} and \ref{tablad} from page \pageref{tablan} onwards for more examples. There are also many numbers that never appear. It is not hard to prove (see Proposition \ref{odd_s}) that no even number greater than $2$ can be found in any sequence. A little more is required to show that, for instance, $b_n^s$ is not $15$ for any pair $(n,s)$. According to this, from now on we shall classify composite numbers into two different groups:

\begin{defi}\label{deff}
 Given a composite integer $m\geq 3$, we call it a {\it present number} if there exist $n$ and $s$ such that $b_n^s = m$. Otherwise, we call it an {\it absent number}.
\end{defi}

\begin{table}
\[
 \begin{array}{|l||l|l|l|l|l|l|l|l|l|l|l|l|l|l|l|l|l|}
 \hline
  a_1^s & b_2^s & b_3^s & b_4^s & b_5^s & b_6^s & b_7^s & b_8^s & b_9^s & b_{10}^s & b_{11}^s & b_{12}^s & b_{13}^s & b_{14}^s & b_{15}^s & b_{16}^s & b_{17}^s \\
  \hline
  1 & 2 & 1 & 2 & 5 & 1 & 7 & 1 & 1 & 5 & 11 & 1 & 13 & 1 & 5 & 1 & 17 \\
  \hline
  2 & 1 & 3 & 1 & 5 & 1 & 7 & 1 & 3 & 5 & 11 & 1 & 13 & 1 & 5 & 1 & 17 \\
  \hline
  3 & 2 & 1 & 2 & 5 & 1 & 7 & 1 & 1 & 5 & 11 & 1 & 13 & 1 & 5 & 1 & 17 \\
  \hline
  4 & 1 & 3 & 1 & 5 & 1 & 7 & 1 & 3 & 5 & 11 & 1 & 13 & 1 & 5 & 1 & 17 \\
  \hline
  5 & 2 & 1 & 2 & 1 & 1 & 7 & 1 & 1 & 1 & 11 & 1 & 13 & 1 & 1 & 1 & 17 \\
  \hline
  6 & 1 & 1 & 1 & 5 & 1 & 7 & 1 & 1 & 5 & 11 & 1 & 13 & 1 & 5 & 1 & 17 \\
  \hline
  7 & 2 & 1 & 2 & 5 & 1 & 1 & 1 & 1 & 5 & 11 & 1 & 13 & 1 & 5 & 1 & 17 \\
  \hline
  8 & 1 & 3 & 1 & 5 & 1 & 7 & 1 & 3 & 5 & 11 & 1 & 13 & 1 & 5 & 1 & 17 \\
  \hline
  9 & 2 & 1 & 2 & 5 & 1 & 7 & 1 & 1 & 5 & 11 & 1 & 13 & 1 & 5 & 1 & 17 \\
  \hline
  10 & 1 & 3 & 1 & 1 & 3 & 7 & 1 & 9 & 1 & 11 & 1 & 13 & 1 & 1 & 1 & 17 \\
  \hline
  11 & 2 & 1 & 2 & 5 & 1 & 7 & 1 & 1 & 5 & 1 & 1 & 13 & 1 & 5 & 1 & 17 \\
  \hline
 \end{array}
\]
\caption{First values of $\{b_n^s\}$ for $1\leq s\leq 11$.}
\label{tabla}
\end{table}

So, besides the first open question of this chapter (can Conjectures \ref{conj} and \ref{conj_3} be generalized for a fixed $s$?), another one -which turns out to be much more interesting- arises: can we classify composite numbers into absent ones and present ones? And the answer is positive; Theorem \ref{goodbad} (among with Propositions \ref{prop_ps} and \ref{odd_s}) gives a full characterization.

But first let us say what we can now about the presence of composite numbers on the sequence $\{b_n^s\}_{n\geq 1}$ for a given $s$. Depending on the factors of $s$, sometimes it is possible to establish an algorithm in order to show that $s$ is allowing any present number to appear. The simplest examples are $s=19$ and $s=103$ (or many of their multiples), that lead to $b_{5^2}^{19}=5^2$ and $b_{13^2}^{103}=13^2$. As we discussed before, this is related to the fact that $5$ and $13$ are probably the only odd primes such that the equality in \eqref{upper_primes} holds for $k=1$. If we get a negative result for that particular $s$, then Conjecture \ref{conj} can be generalized to $\{b_n^s\}$, by using a slightly modified version of \eqref{upper_primes} (that changes for every $s$). It seems difficult, however, to establish a general constructive method for any large $s$. Regarding Proposition \ref{prop_p}, it can be extended for any $s$ (see Proposition \ref{prop_ps}), but not Proposition \ref{prop_3} (see the remark after Proposition \ref{odd_s}). First, we need to introduce this extended version of Lemma \ref{lema_aux}:

\begin{lema}\label{lema_auxs}
 Fix $s\geq 1$. Sequences $\{a_n^s\}$ and $\{b_n^s\}$ satisfy the following results:
 \begin{enumerate}
  \item $a_{n-1}^s$ is greater than $n$ for every $n\geq 4$ and $s\geq 1$.
  \item If a prime $p$ divides $a_n^s$ for a given $n\geq 1$ and $p\nmid s$, then $p\leq n+1$.
  \item $b_n^s$ can be written as 
  \begin{equation}\label{bn_alts}
  b_n^s = \frac{\mcm{n}{a_{n-1}^s}}{a_{n-1}^s} = \frac{n}{\mcd{n}{a_{n-1}^s}}.
  \end{equation}
  In particular, $b_n^s$ is always a divisor of $n$.
  \item A given $a_n^s$ for $n\geq 2$ can be expressed in terms of $b_2^s,b_3^s,\ldots,b_n^s$. Namely,
  \begin{equation}\label{abb}
   a_n^s = \prod_{k=2}^n(1+b_k^s),\qquad n\geq 2.
  \end{equation}
 \end{enumerate}
\end{lema}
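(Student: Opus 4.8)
The plan is to prove the four assertions of Lemma~\ref{lema_auxs} as a natural generalization of Lemma~\ref{lema_aux}, following essentially the same strategy but tracking carefully the extra hypothesis $p\nmid s$ in the second point. I first observe that the recurrence \eqref{ans} forces $a_n^s \geq 2\,a_{n-1}^s$ whenever $n \geq 2$, because $\mcm{n}{a_{n-1}^s}$ is always at least $a_{n-1}^s$. This doubling property is the engine behind both the growth estimate and the factorization formula, and it holds independently of the value of $s$.

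For the first point I would establish that $a_{n-1}^s > n$ for $n \geq 4$ by combining the doubling property with a suitable base case. The only subtlety compared with the $s=1$ situation is that I can no longer rely on the specific numerical value $a_3 > 4$; instead I would note that for any $s \geq 1$ we have $a_1^s = s \geq 1$, and tracing the first few terms gives $a_3^s \geq 2^2 a_1^s \geq 4$, with strict inequality $a_3^s > 4$ being verified directly (using that $\mcm{n}{a}/a$ exceeds $1$ for at least one of the first steps). Then induction using $a_n^s \geq 2 a_{n-1}^s$ propagates $a_{n-1}^s > n$ to $a_n^s > n+1$. The second point follows from the bound $a_n^s = a_{n-1}^s\bigl(1 + \mcm{n}{a_{n-1}^s}/a_{n-1}^s\bigr) \leq a_{n-1}^s(1+n)$, exactly as in Lemma~\ref{lema_aux}; the key observation is that the only prime factors a term $a_n^s$ can pick up beyond those already dividing $s$ are bounded by $1+n$, so by induction any prime $p$ dividing $a_n^s$ with $p \nmid s$ must satisfy $p \leq n+1$. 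The base case here requires care, since the primes dividing $a_1^s = s$ are explicitly excluded by the hypothesis, which is precisely why the extra condition $p \nmid s$ appears.

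The third point is purely algebraic and identical to the $s=1$ case: from \eqref{ans} we write $b_n^s = \mcm{n}{a_{n-1}^s}/a_{n-1}^s$, and applying the identity $\mcd{u}{v}\cdot\mcm{u}{v} = uv$ with $u=n$, $v=a_{n-1}^s$ yields $b_n^s = n/\mcd{n}{a_{n-1}^s}$, from which divisibility of $n$ by $b_n^s$ is immediate. Finally, the fourth point follows by rewriting the defining relation as $a_n^s = a_{n-1}^s(1 + b_n^s)$ and unfolding the recursion by induction down to the index $2$, giving the telescoping product \eqref{abb}; note that the product starts at $k=2$ because $a_1^s = s$ is absorbed into the normalization implicit in the definition of $b_n^s$, so one must check the base case $n=2$ separately, where $a_2^s = a_1^s(1 + b_2^s)$ matches $\prod_{k=2}^{2}(1+b_k^s)$ after accounting for the factor $a_1^s$.

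I expect the main obstacle to be the second point, specifically the bookkeeping around the hypothesis $p \nmid s$. Unlike the $s=1$ case, where the only prime factors of the terms are those introduced by the recurrence itself, here the initial value $s$ can contribute arbitrary large prime factors that persist through every subsequent term. The argument must therefore be organized so that the inductive bound $p \leq n+1$ is only claimed for the \emph{new} primes, i.e.\ those not already present in $s$, and one must verify that the multiplicative step $a_n^s \leq a_{n-1}^s(1+n)$ cannot introduce a fresh prime exceeding $n+1$. This is the delicate point where the generalization genuinely differs from the original lemma and where I would spend most of the care.
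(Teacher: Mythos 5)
Your approach is essentially the paper's: the paper computes $(a_2^s,a_3^s,a_4^s)$ explicitly according to the residue of $s$ modulo $6$ (equation \eqref{4ss}) and observes that the proof of Lemma \ref{lema_aux} carries over once one allows the alternative ``or $p\mid s$'' in the second point. Your doubling argument for point 1, the bound $a_n^s\leq a_{n-1}^s(1+n)$ with the vacuous base case at $n=1$ for point 2, and the identity $\mcd{u}{v}\cdot\mcm{u}{v}=uv$ for point 3 are exactly what is needed, and your emphasis on where the hypothesis $p\nmid s$ enters is well placed.

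The one place to be more careful is point 4. Your telescoping correctly yields $a_n^s=a_1^s\prod_{k=2}^n(1+b_k^s)=s\prod_{k=2}^n(1+b_k^s)$, but the formula \eqref{abb} as printed omits the factor $s$, and your phrase ``after accounting for the factor $a_1^s$'' glosses over the fact that the two expressions genuinely disagree for $s\geq 2$: already $a_2^2=4$ while $1+b_2^2=2$. This is a typo in the statement rather than a flaw in your argument (the paper's later use of \eqref{abb} in the proof of Theorem \ref{goodbad} tacitly relies on the extra factor $s$, since it separately assumes $\gcd(m,s)=1$ there), but your write-up should either correct the statement to $a_n^s=s\prod_{k=2}^n(1+b_k^s)$ or note explicitly that the printed formula holds only for $s=1$; as written, your base case ``matches'' only by fiat.
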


\begin{proof}
 Taking into account that
 \begin{equation}\label{4ss}
  (a_2^s,a_3^s,a_4^s) = \begin{cases} (2s,4s,8s) & \text{if    }s\equiv 0\pmod{6} \\ (2s,8s,16s) & \text{if    }s\equiv 2,4\pmod{6} \\ (3s,6s,18s) & \text{if    }s\equiv 1\pmod{2}\end{cases},
 \end{equation}
 and that if $p$ divides $a_n$, then either $p\leq n+1$ or $p$ divides $s$, the proof of Lemma \ref{lema_aux} is still suitable for $\{a_n^s\}$ and $\{b_n^s\}$. Note that \eqref{abb} is obtained as \eqref{acumulador}.
\end{proof}

\begin{prop}\label{prop_ps}
 Fix an integer $s\geq 1$ and a prime $p\neq 3$. We have $b_4^s=b_2^s$ and
 \[
  b_p^s = \begin{cases} p & \text{if    }p\nmid s \\ 1 & \text{if    }p\mid s\end{cases}, \qquad\quad b_3^s = \begin{cases} 3 & \text{if    }s\equiv 2,4\pmod{6} \\ 1 & \text{if    }s\not\equiv 2,4\pmod{6}.\end{cases}
 \]
\end{prop}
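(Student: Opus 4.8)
The plan is to break the statement into three independent computations: the identities $b_4^s=b_2^s$ and the value of $b_3^s$ will fall out of the explicit table \eqref{4ss}, while the formula for $b_p^s$ requires a genuine argument only in the range $p\geq 5$ with $p\nmid s$, where I would mimic the proof of Proposition \ref{prop_p}. To begin, since $b_n^s=a_n^s/a_{n-1}^s-1$, the three rows of \eqref{4ss} give $b_2^s=2s/s-1=1$ and $b_4^s=8s/4s-1=16s/8s-1=1$ when $s$ is even, and $b_2^s=3s/s-1=2=18s/6s-1=b_4^s$ when $s$ is odd; in every case $b_4^s=b_2^s$. The same table yields $b_3^s=a_3^s/a_2^s-1$, which equals $8s/2s-1=3$ exactly in the row $s\equiv 2,4\pmod 6$ and equals $1$ in the other two rows, matching the claimed dichotomy. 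No further input is needed for these two parts.

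For $b_p^s$ I would first dispose of the two easy regimes. The prime $p=2$ is already settled by the computation of $b_2^s$ above, which gives $2$ when $2\nmid s$ and $1$ when $2\mid s$. If $p\mid s$ (for any prime $p$), then, since each $a_n^s$ is a multiple of its predecessor and hence of $a_1^s=s$, we get $p\mid s\mid a_{p-1}^s$, so $\gcd(p,a_{p-1}^s)=p$ and \eqref{bn_alts} forces $b_p^s=1$. This reduces the proposition to proving $b_p^s=p$ whenever $p\geq 5$ and $p\nmid s$.

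This last case adapts the proof of Proposition \ref{prop_p} almost verbatim, the new ingredient being the hypothesis $p\nmid s$. By \eqref{bn_alts} it suffices to show $\gcd(p,a_{p-1}^s)=1$. Point 2 of Lemma \ref{lema_auxs} applies \emph{precisely because} $p\nmid s$: as $p>(p-2)+1$, it cannot divide $a_{p-2}^s$. On the other hand $p-1$ is even, and by \eqref{4ss} together with the divisibility $a_{n-1}^s\mid a_n^s$ every $a_n^s$ with $n\geq 3$ is even, so $2$ is a common divisor of $p-1$ and $a_{p-2}^s$. Hence $\mcm{p-1}{a_{p-2}^s}\leq (p-1)a_{p-2}^s/2$ and
\[
 \frac{a_{p-1}^s}{a_{p-2}^s}=1+\frac{\mcm{p-1}{a_{p-2}^s}}{a_{p-2}^s}\leq\frac{p+1}{2}<p.
\]
Since $p$ divides neither the positive integer $a_{p-1}^s/a_{p-2}^s<p$ nor $a_{p-2}^s$, and $p$ is prime, it cannot divide their product $a_{p-1}^s$; thus $\gcd(p,a_{p-1}^s)=1$ and $b_p^s=p$. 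The only place where care is required is in this final case, where one must verify that the assumption $p\nmid s$ is exactly what licenses the application of point 2 of Lemma \ref{lema_auxs}; everything else is routine bookkeeping from \eqref{4ss} and the divisibility chain $a_1^s\mid a_2^s\mid\cdots$.
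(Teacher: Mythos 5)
Your proposal is correct and follows essentially the same route as the paper: the cases $n=2,3,4$ are read off from \eqref{4ss}, the case $p\mid s$ follows from $s\mid a_{p-1}^s$ via \eqref{bn_alts}, and the case $p\geq 5$, $p\nmid s$ repeats the argument of Proposition \ref{prop_p} using point 2 of Lemma \ref{lema_auxs}. You merely spell out the details that the paper leaves as ``proceed as in the proof of Proposition \ref{prop_p}.''
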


\begin{prop}\label{odd_s}
 Given any initial condition $s$, $b_n^s$ is always odd for $n\geq 5$.
\end{prop}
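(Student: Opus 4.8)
The plan is to recast the parity of $b_n^s$ as a divisibility-by-powers-of-two statement and then control it by induction. Write $v_2(m)$ for the exponent of $2$ in the factorization of a positive integer $m$. By the last identity in \eqref{bn_alts}, $b_n^s = n/\mcd{n}{a_{n-1}^s}$, so $b_n^s$ is odd exactly when the gcd absorbs every factor of $2$ present in $n$; that is, $b_n^s$ is odd if and only if $v_2(a_{n-1}^s)\geq v_2(n)$. The whole proposition therefore reduces to establishing
\[
 v_2(a_{n-1}^s)\geq v_2(n)\qquad\text{for every }n\geq 5.
\]

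The engine driving the argument is the factorization $a_n^s = a_{n-1}^s(1+b_n^s)$, immediate from \eqref{ans}, which upon taking $2$-adic valuations gives the additive recurrence $v_2(a_n^s)=v_2(a_{n-1}^s)+v_2(1+b_n^s)$. The key observation is that whenever $b_n^s$ is odd, $1+b_n^s$ is even, so $v_2(a_n^s)\geq v_2(a_{n-1}^s)+1$: oddness of a term forces the valuation to grow by at least one. This is what lets a single linear lower bound outrun the slowly growing quantity $v_2(n)$.

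Concretely, I would prove by induction on $n\geq 4$ the stronger statement $v_2(a_n^s)\geq n-3$. The base case $n=4$ is read off from the explicit triple \eqref{4ss}: in every class one has $v_2(a_4^s)\geq 1$, the tight case being $s$ odd, where $a_4^s=18s$ and $v_2(a_4^s)=1$. For the inductive step, fix $n\geq 5$ and assume $v_2(a_{n-1}^s)\geq n-4$. Since $2^{v_2(n)}$ divides $n$ one has $v_2(n)\leq \log_2 n$, and an elementary check gives $v_2(n)\leq n-4$ for all $n\geq 5$ (trivial when $n$ is odd, and for even $n$ the only near-tight instances are powers of two with exponent at least $3$). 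Hence $v_2(a_{n-1}^s)\geq n-4\geq v_2(n)$, so by the first paragraph $b_n^s$ is odd; feeding this into the recurrence yields $v_2(a_n^s)\geq (n-4)+1=n-3$, closing the induction. Oddness of $b_n^s$ for every $n\geq 5$ is produced as a byproduct of the step.

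The delicate point is that the valuation bound and the oddness are genuinely entangled and must be carried through the induction together: each step uses the accumulated valuation to certify that the next $b_n^s$ is odd, and that oddness in turn supplies the increment that keeps the bound alive. Consequently the proof is only as strong as its base case, which is why the explicit values \eqref{4ss} are indispensable---particularly for odd $s$, where the valuation starts at its minimum. Beyond that, the sole quantitative input is the elementary inequality $v_2(n)\leq n-4$ on the few small $n$ where it is nearly sharp; everything else is bookkeeping.
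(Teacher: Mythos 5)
Your proof is correct, and it takes a genuinely different and more elementary route than the paper. The paper's proof imports Rosser's explicit bound \eqref{exp_bnd_1} to show $\pi(2^k;2,1)\geq k$ for $k\geq 3$, then harvests one factor of $2$ from each odd prime $q<2^k$ (since $b_q^s\in\{1,q\}$ makes $1+b_q^s$ even), concluding $2^k\mid a_{2^k-1}^s$ and hence $v_2(a_{n-1}^s)\geq v_2(n)$ via the divisibility chain. You dispense with prime counting entirely: your observation that \emph{every} step with $b_n^s$ odd multiplies $a_{n-1}^s$ by the even number $1+b_n^s$ lets the $2$-adic valuation grow linearly, $v_2(a_n^s)\geq n-3$, which trivially dominates $v_2(n)$; the induction is then self-sustaining from the base case \eqref{4ss}, and the elementary inequality $v_2(n)\leq n-4$ for $n\geq 5$ is all that is needed. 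Your argument is self-contained and avoids the analytic input, at the cost of giving a weaker structural statement (it does not isolate which steps contribute the factors of $2$). It is worth noting why this shortcut is special to the prime $2$ and does not replace the prime-counting argument in Proposition \ref{prop_3}: an odd value of $b_n^s$ automatically makes $1+b_n^s$ even, but there is no analogous automatic reason for $1+b_n^s$ to be divisible by $3$, so for $p=3$ one really must count the steps where $b_n^s\equiv -1\pmod 3$, which is what Lemma \ref{pi} does.
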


Note that Proposition \ref{prop_ps} is a generalization of Proposition \ref{prop_p}. However, it is not always possible to do the same with Proposition \ref{prop_3}, because for a large prime $s$ we cannot prove $b_n^s\neq s$ for every $n$ as we did for $\{b_n^1\}$ on Lemma \ref{pi} (this would need an explicit lower bound for $\pi(x;s,s-1)$, which do not always exists for large values of $s$). Hence a full generalization of Conjecture \ref{conj_3} holds whenever $s$ is not a large prime. Note also that combining these two Propositions it is clear that, if $m\geq 4$ is an even integer, then $b_n^s \neq m$ for every possible values of $n$ and $s$.

\

\begin{demos}[Proof of Proposition \ref{prop_ps}]
By \eqref{4ss}, the Proposition is straightforward for $n=2,3,4$. For a prime $p\geq 5$, if $p$ divides $s$, then $a_{p-1}$ is a multiple of $p$ and by \eqref{bn_alts}, $b_p = p/\mcd{a_{p-1}}{p} = 1$. And if $p\nmid s$, we can proceed as in the proof of Proposition \ref{prop_p}, getting that $p$ and $a_{p-1}^s$ are coprime and hence $b_p^s = p$.
\end{demos}

\begin{demos}[Proof of Proposition \ref{odd_s}]
 We employ an explicit lower bound on the number of primes which are less than a fixed $x$ \cite{rosser}, \begin{equation}\label{exp_bnd_1}
  \pi(x;2,1) > \frac{x}{\log x+2}-1\qquad\text{for}\quad x\geq 55.
 \end{equation}
 Using this inequality and proceeding as in the proof of Lemma \ref{pi}, it is easy to check that $\pi(2^k,2,1)\geq k$ for $k\geq 3$. Combining this with \eqref{4ss}, we have $8|a_7^s$ and by induction, $2^k|a_{2^k-1}^s$ for $k\geq 3$.
\end{demos}

For the next results, we shall use the following notation: for any two integers $q$ and $m$, we will write $q\vartriangleleft m$ if every prime divisor of $q$ is also a prime divisor of $m$ and $q<m$. For instance, $9\vartriangleleft 15$, $27\ntriangleleft 15$ or $6\ntriangleleft 9$.

\begin{teor}\label{goodbad}
 Let $m\geq 3$ be an odd composite integer. It there exist $p$ and $q$ such that $p$ is a prime divisor of $m$, $q\vartriangleleft m$ and $q\equiv -1\pmod{p}$, then $m$ is an absent number. Otherwise there exists an integer $s$ such that $b_m^s = m$ and in consequence $m$ is a present number.
\end{teor}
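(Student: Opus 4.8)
The plan is to treat the two implications separately, after first translating $b_n^s=m$ into statements about prime valuations. Writing $v_{p'}$ for the $p'$-adic valuation and using part (3) of Lemma \ref{lema_auxs}, the identity $b_n^s=n/\gcd(n,a_{n-1}^s)=m$ is equivalent to $m\mid n$ together with $v_{p'}(a_{n-1}^s)=v_{p'}(n)-v_{p'}(m)$ for every prime $p'\mid m$ and $v_\ell(a_{n-1}^s)\ge v_\ell(n)$ for every prime $\ell\nmid m$. Setting $D=n/m$, these say exactly that $D\mid a_{n-1}^s$ and that $a_{n-1}^s/D$ is coprime to $m$; in particular, for the base case $n=m$ they reduce to the single requirement $\gcd(m,a_{m-1}^s)=1$. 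The heart of the dichotomy lives already at $n=m$, so I would first prove the clean equivalence ``a pair $(p,q)$ as in the statement exists $\iff$ $b_m^s\neq m$ for every $s$'', and then upgrade the forward (absent) direction to arbitrary $n$.

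For the absent direction at $n=m$, suppose $\gcd(m,a_{m-1}^s)=1$ for some $s$ and let $(p,q)$ be such a pair. Since $m$ is odd, $p\ge 3$ and $q\equiv-1\pmod p$ gives $q\ge p-1\ge 2$, while $q<m$ gives $2\le q\le m-1$. Every prime factor of $q$ divides $m$, and by $\gcd(m,a_{m-1}^s)=1$ no prime factor of $m$ divides $a_{m-1}^s$; as $a_{q-1}^s\mid a_{m-1}^s$ this forces $\gcd(q,a_{q-1}^s)=1$, hence $b_q^s=q$ by \eqref{bn_alts}. Then $1+b_q^s=1+q\equiv 0\pmod p$ divides $a_q^s\mid a_{m-1}^s$, so $p\mid a_{m-1}^s$, contradicting coprimality. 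Thus no such pair can coexist with $\gcd(m,a_{m-1}^s)=1$, giving $b_m^s\neq m$ for all $s$.

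For the present direction, assuming no such pair exists I would construct $s$ explicitly: let $s=\prod_\ell \ell^{N}$, where $\ell$ runs over all primes $\ell<m$ with $\ell\nmid m$ and $N\ge\log_2 m$. Then $\gcd(s,m)=1$, and an induction on $k$ using $v_\ell(a_{k-1}^s)\ge v_\ell(s)=N\ge v_\ell(k)$ shows that for every $2\le k\le m-1$ each such $\ell$ is fully absorbed, i.e. $v_\ell(b_k^s)=0$. Hence every prime factor of $b_k^s$ divides $m$; since also $b_k^s\mid k\le m-1<m$, either $b_k^s=1$ or $b_k^s\vartriangleleft m$. If some $b_k^s$ were $\equiv-1\pmod{p'}$ for a prime $p'\mid m$, then (as $p'\ge 3$ forces $b_k^s>1$, hence $b_k^s\vartriangleleft m$) the pair $(p',b_k^s)$ would be of the forbidden type, contrary to hypothesis; so no $1+b_k^s$ is divisible by a prime of $m$. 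Together with $\gcd(s,m)=1$ this yields $\gcd(m,a_{m-1}^s)=1$, and therefore $b_m^s=m$ by \eqref{bn_alts}, exhibiting $m$ as a present number.

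The main obstacle is upgrading the absent direction from $n=m$ to arbitrary $n=mD$, where the coprimality $\gcd(m,a_{m-1}^s)=1$ is replaced by the weaker budgets $v_{p'}(a_{n-1}^s)=v_{p'}(D)$; now the self-forcing at the single index $q$ produces only one factor of $p$, which the budget $v_p(D)$ may absorb. My approach would be to exploit the valuation cap $v_{p_i}(a_{k-1}^s)\le v_{p_i}(a_{n-1}^s)=v_{p_i}(D)$ (from $a_{k-1}^s\mid a_{n-1}^s$) for the prime factors $p_i$ of $q$: at indices $k\le n-1$ divisible by a high enough power of each $p_i$, the cap makes it impossible to block the value $q$, forcing $q\mid b_k^s$ and, once the $p_i$-budgets are saturated, $b_k^s=q$ exactly; counting such indices should give more than $v_p(D)$ factors of $p$, a contradiction. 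The delicate points—ruling out the prime contributions, which an adversarial choice of $s$ can neutralize only by seeding primes that the budget constraints then restrict, and controlling the extra $p_i$-powers that spoil the residue $b_k^s\equiv-1\pmod p$ before saturation—are where the quantitative bounds of Lemma \ref{pi} and \eqref{exp_bnd_1} would re-enter, and this accounting is the part I expect to be hardest.
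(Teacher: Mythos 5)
Your ``present'' half is correct and is essentially the paper's own argument: you load $s$ with high enough powers of every prime $\ell<m$ with $\ell\nmid m$ so that these primes are absorbed at every step $k<m$, conclude that each $b_k^s$ is $1$ or satisfies $b_k^s\vartriangleleft m$, invoke the hypothesis to rule out $b_k^s\equiv-1\pmod{p'}$ for the prime divisors $p'$ of $m$, and deduce $\gcd(m,a_{m-1}^s)=1$ from $a_{m-1}^s=s\prod_{k=2}^{m-1}(1+b_k^s)$; the paper does the same with $s=\prod_{l\leq m,\ \gcd(l,m)=1}l$. Your absent-direction argument at $n=m$ is also correct, and pleasantly clean: if $\gcd(m,a_{m-1}^s)=1$ then $\gcd(q,a_{q-1}^s)=1$, so $b_q^s=q$ and $p\mid 1+q\mid a_q^s\mid a_{m-1}^s$, a contradiction.

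The genuine gap is the one you flag yourself. ``Absent'' means $b_n^s\neq m$ for \emph{every} $n$ and every $s$, not just $n=m$, and for $n=mD$ with $D>1$ the self-forcing argument collapses: the condition $b_n^s=m$ only demands $v_{p'}(a_{n-1}^s)=v_{p'}(D)$ for $p'\mid m$, so the single factor of $p$ produced at index $q$ can be absorbed by the budget $v_p(D)$. Your proposed repair --- saturating the valuation caps at indices divisible by high powers of the prime factors of $q$ and counting forced occurrences of $q$ --- is only a sketch: you do not show that $b_k^s$ at those indices equals $q$ exactly rather than a proper multiple of $q$ (which would destroy the congruence $b_k^s\equiv-1\pmod p$), nor that the number of such indices below $n$ exceeds $v_p(D)$, and it is not clear this accounting can be closed. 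The paper proves absence by an entirely different device (Lemma \ref{goodlemma} and Proposition \ref{goodprop}): writing $m=pqm_0$, it shows by induction along the checkpoints $t=p^cq^dm_0$ that $b_t^s\in\{1,p,q\}$ always, with each of the three possible states feeding into the next, and that between consecutive checkpoints no index $k$ can have $b_k^s=m$ because $b_k^s\mid k$ forces the least admissible candidate to be the next checkpoint. That induction is the substantive content of the absent direction and is missing from your proposal.
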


This result gives us a method in order to know whether an odd composite number is absent or not. It is clear from the Definition that every power of an odd prime is present. Actually, they are easy to find as explicit counterexamples. For instance,
\[
 b_{7^2}^{533} = 7^2,\quad b_{11^2}^{4687} = 11^2, \quad b_{13^2}^{103} = 13^2, \quad b_{5^3}^{s_1} = 5^3,\quad\text{for}\quad s_1=19\cdot 29\cdot 59\cdot 79\cdot 89\cdot 109.
\]
For numbers with more than prime divisors, the condition of Theorem \ref{goodbad} is tricky and there are several possibilities, even with just two different primes. For instance, $21$ is present, since $7\not\equiv -1\pmod{3}$. But $63$ is absent, because $3^3 \equiv -1\pmod{7}$. The smallest absent number is $15$ and, in the same way, every number of the form $3p$ for a prime $p\equiv 5\pmod{6}$ is an absent one. The more different divisors has $m$, the smallest are the chances for it to be present. The following result is straightforward:

\begin{cor}\label{sonofbad}
 Every multiple of an absent number is also absent. Also, every divisor of a present number is present.
\end{cor}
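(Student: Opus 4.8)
The plan is to deduce both assertions from the characterization in Theorem \ref{goodbad} by observing that they are two faces of the same statement. Since ``multiple'' and ``divisor'' are inverse relations and, on composite integers $m\geq 3$, ``absent'' is exactly the negation of ``present'', the implication ``$d$ absent and $d\mid M$ $\Rightarrow$ $M$ absent'' is the contrapositive of ``$M$ present and $d\mid M$ $\Rightarrow$ $d$ present''. Hence it suffices to establish a single transfer principle. I would first reduce the setting: by the remark following Proposition \ref{odd_s} every even integer $m\geq 4$ is absent, so a present number is necessarily odd and all its composite divisors are odd as well, while the divisors equal to $1$ or to a prime are trivially present (the latter because $b_p^s=p$ by Proposition \ref{prop_ps}). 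Thus only odd composite $d$ and $M$ genuinely need to be treated, and there Theorem \ref{goodbad} applies.

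The core of the argument is the transfer of an ``absence witness''. Suppose $d$ is an odd composite absent number; by Theorem \ref{goodbad} there are a prime $p\mid d$ and an integer $q\vartriangleleft d$ with $q\equiv -1\pmod{p}$. Let $M$ be any multiple of $d$: if $M$ is even it is absent by the reduction above, and if $M=d$ there is nothing to prove, so assume $M>d$ is odd. I claim that the same pair $(p,q)$ witnesses that $M$ is absent. Indeed $p\mid d\mid M$, so $p$ divides $M$; and since every prime divisor of $q$ divides $d$, hence $M$, while $q<d\leq M$, we obtain $q\vartriangleleft M$. The congruence $q\equiv -1\pmod{p}$ is untouched, so Theorem \ref{goodbad} forces $M$ to be absent. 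This proves that every multiple of an absent number is absent; reading the same implication contrapositively, and invoking the trivial cases above for non-composite divisors, yields that every divisor of a present number is present.

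The only point requiring care---and the main, though mild, obstacle---is verifying that the relation $q\vartriangleleft d$ is preserved when passing from $d$ to a multiple $M$: one must check both clauses of the definition of $\vartriangleleft$, namely that the prime divisors of $q$ survive (immediate from $d\mid M$) and that the strict inequality $q<M$ still holds (immediate from $q<d\leq M$). Once this monotonicity of $\vartriangleleft$ under divisibility is in place, the corollary follows with no further computation.
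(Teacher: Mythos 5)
Your proof is correct and is surely the intended content of the paper's unwritten ``straightforward'' remark: one reads Theorem \ref{goodbad} as a characterization of absence for odd composites and transfers the witness pair $(p,q)$ from $d$ to any multiple $M$, using $p\mid d\mid M$ and the observation that $q\vartriangleleft d$ together with $d\mid M$ and $q<d\leq M$ gives $q\vartriangleleft M$, with even multiples and non-composite divisors handled separately exactly as you do. The only caveat is that the paper's proof of Theorem \ref{goodbad} itself cites this corollary, so to avoid circularity you should note that that citation is dispensable (at that point $n_1\vartriangleleft m$ and the standing hypothesis on $m$ already exclude $n_1\equiv -1\pmod{p_j}$); with that observation your derivation is sound.
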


The proof of the theorem is split in two parts. For the one concerning the absent numbers, we use Lemma \ref{goodlemma} and a recursive step (Proposition \ref{goodprop}).

\begin{lema}\label{goodlemma}
 If $m$ is an absent number for a given pair $(p,q)$, then we can always write $m=pqm_0$, and $p$ and $q$ are necessarily odd with $p<q$ and $\gcd(p,q)=1$.
\end{lema}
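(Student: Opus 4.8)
The plan is to extract every assertion directly from the three defining conditions on a witness $(p,q)$ for an absent $m$: that $p$ is a prime dividing $m$, that $q\vartriangleleft m$, and that $q\equiv -1\pmod p$, using throughout that $m$ is odd. Each of the parity, coprimality and ordering claims will fall out by an elementary argument; the factorization is the one part I expect to demand real work.

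I would first settle parity and coprimality. As $p\mid m$ and $m$ is odd, $p$ is an odd prime. For $q$ the relation $q\vartriangleleft m$ forces every prime factor of $q$ to be a prime factor of $m$, hence odd, so $q$ is odd once we know $q>1$; and $q>1$ holds because $q=1$ would give $1\equiv -1\pmod p$, i.e.\ $p\mid 2$, contradicting that $p$ is odd. Coprimality is immediate: $q\equiv -1\pmod p$ gives $p\nmid q$, and as $p$ is prime this is $\gcd(p,q)=1$.

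The single inequality that needs an idea is $p<q$. Writing $q=kp-1$ with $k\geq 1$, which is legitimate since $q\equiv -1\pmod p$ and $q>0$, the case $k=1$ would force $q=p-1$, an even number (as $p$ is odd) and hence impossible because $q$ is odd. Therefore $k\geq 2$ and $q\geq 2p-1>p$. So the ordering is produced entirely by the parity clash between $q$ being odd and $p-1$ being even, and this parity observation is the one clever point among the quick assertions.

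Finally there is the factorization $m=pqm_0$. The factor $p$ is free, since $p\mid m$ lets us write $m=pm_0'$, and by the coprimality just proved $pq\mid m$ is equivalent to $q\mid m$. This is where I expect the main obstacle to lie, because $q\vartriangleleft m$ does \emph{not} by itself yield $q\mid m$ (for instance $9\vartriangleleft 15$ while $9\nmid 15$), so the divisibility cannot be formal. My first attempt would be to show that for an absent $m$ one can always select the witness so that $q$ genuinely divides $m$, whence $m_0:=m/(pq)$ is a positive integer; should that selection fail to exist in some cases, the intended meaning of $m=pqm_0$ must instead be the weaker structural one, with $m_0$ recording the remaining factors and $q$ entering only through its prime support. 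Deciding which of these is the correct reading, and then securing the associated divisibility from the interaction of $q\equiv -1\pmod p$ with the exact prime factorization of $m$, is the genuinely load-bearing step.
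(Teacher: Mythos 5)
Your arguments for the parity, coprimality and ordering claims are complete and correct: $p$ is odd because $p\mid m$ and $m$ is odd; $q\neq 1$ because $1\equiv-1\pmod p$ would force $p=2$; every prime factor of $q$ divides the odd number $m$, so $q$ is odd; $p\mid q+1$ gives $p\nmid q$ and hence $\gcd(p,q)=1$; and writing $q=kp-1$, the case $k=1$ is excluded because $p-1$ is even while $q$ is odd, so $q\geq 2p-1>p$. The paper states this lemma without any proof at all, so there is nothing to compare against; these elementary arguments are surely the intended ones.

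The gap you flag in the factorization claim is real, and neither of your two proposed repairs can produce a proof of the literal statement. Your first idea, re-selecting the witness so that $q$ genuinely divides $m$, fails already for $m=63=3^2\cdot 7$, which the paper's own Table \ref{tablad} lists as absent with witness $3^3\equiv-1\pmod 7$: one checks directly that $(p,q)=(7,27)$ is the \emph{only} pair satisfying the hypotheses of Theorem \ref{goodbad} for $m=63$ (among $q\in\{3,7,9,21,27,49\}$ only $27$ is $\equiv-1$ modulo $3$ or $7$), and $pq=189$ does not divide $63$; indeed no divisor of $63$ is $\equiv-1$ modulo $3$ or modulo $7$. So ``$m_0:=m/(pq)$ is a positive integer'' is simply false for some absent numbers, and the statement can only be meant in the weaker structural sense you describe — a looseness that then propagates into Proposition \ref{goodprop}, whose indices $t=p^cq^dm_0$ are no longer obviously integers from which $p^cq^d\mid a_{t-1}^s$ can be read off. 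In short: you proved the three claims that are true, and the one you could not prove is the one that, as literally written, is false; the defect is in the statement, not in your attempt.
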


\begin{prop}\label{goodprop}
 Let $m=pqm_0$ be an absent number for a pair $(p,q)$. For every integers $c,d\geq 1$ and $t:=p^cq^dm_0$, we claim that
\begin{enumerate}
 \item If $b_t^s = 1$, then $b_{pqt}^s \in\{1,p,q\}$.
 \item If $b_t^s = p$, then $b_{qt}^s  \in\{1,p,q\}$.
 \item If $b_t^s = q$, then $b_{pt}^s  \in\{1,q\}$.
 \item If $b_t^s = j\in\{1,p,q\}$, then $b_k^s\neq m$ for $t\leq k<(pq/j)t$.
\end{enumerate}
\end{prop}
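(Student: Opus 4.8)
The plan is to prove all four parts together by carefully tracking the $p$-adic and $q$-adic valuations of $a_k^s$ as $k$ runs through the relevant range, exploiting the structural facts from Lemma \ref{lema_auxs}. The central observation is formula \eqref{bn_alts}, $b_k^s = k/\gcd(k,a_{k-1}^s)$: for $b_k^s$ to equal one of $1$, $p$, $q$, or $m=pqm_0$, the value of $\gcd(k,a_{k-1}^s)$ must pick out exactly the complementary divisor of $k$. Since $m=pqm_0$ (using Lemma \ref{goodlemma}), whenever $k$ is a multiple of $t=p^cq^dm_0$ the only prime factors of $k$ that matter are $p$ and $q$ together with those already baked into $m_0$; the quantity $b_k^s$ is therefore governed by how the exponents of $p$ and $q$ in $k$ compare to their exponents in $a_{k-1}^s$.

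First I would set up notation for $v_p(\cdot)$ and $v_q(\cdot)$, the $p$- and $q$-adic valuations, and record from \eqref{abb} how $v_p(a_k^s)$ increases: since $a_k^s = a_{k-1}^s(1+b_k^s)$, the exponent $v_p(a_k^s)$ exceeds $v_p(a_{k-1}^s)$ precisely by $v_p(1+b_k^s)$. The hypothesis that $m$ is absent for $(p,q)$ means there is no $q' \vartriangleleft m$ with $q'\equiv -1 \pmod p$; this is exactly the obstruction that prevents the factor $p$ from being supplied to $a_{k-1}^s$ by any earlier index whose $b$-value is a small prime power built from the primes of $m$. Concretely, for indices $k$ in the window $[t,(pq/j)t)$ that are themselves of the form $p^{c'}q^{d'}m_0$, the recurrence can only raise $v_p$ or $v_q$ through terms $1+b_k^s$ that are divisible by $p$ or $q$; the absence condition is what forbids the ``jump'' that would let $\gcd(k,a_{k-1}^s)$ shrink enough to make $b_k^s=m$. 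I would then treat the three cases for the value $b_t^s=j$ separately, computing in each the possible values of $\gcd(k,a_{k-1}^s)$ at the next relevant index and showing it forces $b_k^s\in\{1,p,q\}$ (parts 1--3), and simultaneously that no index strictly below $(pq/j)t$ can achieve $b_k^s=m$ (part 4).

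The heart of the argument is a bookkeeping lemma on valuations: I would show that under the absence hypothesis, for every $k$ with $t\le k<(pq/j)t$ one has $v_p(a_{k-1}^s)\ge v_p(k)-1$ and $v_q(a_{k-1}^s)\ge v_q(k)-1$ in the appropriate combination dictated by $j$, so that $\gcd(k,a_{k-1}^s)$ already contains all but at most one factor of $p$ and one factor of $q$. This immediately bounds $b_k^s=k/\gcd(k,a_{k-1}^s)$ below $pq$, hence below $m$, yielding part 4; and evaluating the borderline indices where a single $p$ or $q$ is still missing gives the trichotomy of parts 1--3. The recursive structure of Proposition \ref{goodprop} — each part reducing the exponent budget $(c,d)$ and advancing the index — is what lets this feed into an induction elsewhere to conclude absence.

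The hard part will be establishing the valuation inequalities uniformly, because controlling $v_p(a_{k-1}^s)$ requires knowing which earlier indices contributed factors of $p$, and that in turn depends on the values $b_{k'}^s$ for $k'<k$, which are not individually pinned down. The key is that we do not need the exact valuations, only that the deficit never exceeds one in each of $p$ and $q$ simultaneously within the stated window; the absence condition $q\equiv -1\pmod p$ failing for all $q\vartriangleleft m$ is precisely the input that closes off the one escape route. Making this quantitative — pinning the length of the window to $pq/j$ and matching it against the maximal valuation gain per step — is the step I would expect to demand the most care, since it is where the combinatorial hypothesis on $(p,q)$ must be converted into an arithmetic inequality on the $a_k^s$.
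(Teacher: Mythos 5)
There is a genuine gap, and it starts with a misreading of the hypothesis. You write that ``$m$ is absent for $(p,q)$'' means there is \emph{no} $q'\vartriangleleft m$ with $q'\equiv -1\pmod p$, and later invoke ``the absence condition $q\equiv -1\pmod p$ failing for all $q\vartriangleleft m$.'' This is backwards: by Theorem \ref{goodbad} and Lemma \ref{goodlemma}, the hypothesis of Proposition \ref{goodprop} is that such a pair \emph{exists}, i.e.\ $p\mid m$, $q\vartriangleleft m$ and $q\equiv -1\pmod p$ \emph{holds}. The entire mechanism of the paper's proof rests on this congruence used affirmatively: whenever some index yields $b^s=q$, the accumulator is multiplied by $1+q$, which is divisible by $p$, so a fresh factor of $p$ is \emph{supplied} to $a^s$ ``for free.'' That is exactly how the paper gets, e.g., part 3 ($b_t^s=q$ forces $p^{c+1}\mid a_t^s$, whence $b_{pt}^s\in\{1,q\}$) and the second subcase of part 1. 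You describe the opposite mechanism ($p$ being \emph{prevented} from entering $a_{k-1}^s$), which is the mechanism of the \emph{present} case in the second half of Theorem \ref{goodbad}, not of this Proposition. Without $p\mid q+1$, parts 1--3 are simply false, so no amount of bookkeeping can recover them under your reading.

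Beyond that, the proposal defers precisely the content that constitutes the proof. The paper's argument is a short, explicit case analysis at the specific indices $qt$, $tq/p$, $pt$ and $pqt$, branching on whether $q^{d+1}$ already divides the accumulator at the earlier index and using $p\mid q+1$ in the branch where it does not; you never perform this analysis and flag the ``quantitative'' step as the part you expect to be hardest. Moreover, your proposed bookkeeping lemma ($v_p(a_{k-1}^s)\geq v_p(k)-1$ and likewise for $q$) would at best give $b_k^s\mid pq$ at the relevant indices, and your inference that this ``bounds $b_k^s$ below $pq$, hence below $m$'' is invalid: it leaves open $b_k^s=pq$, which equals $m$ exactly when $m_0=1$ --- the generic case (e.g.\ $m=15$). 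The whole point is to show the $p$-deficit and the $q$-deficit are never both positive at the same index, and that is achieved only by the congruence you have negated. Part 4 in the paper is, by contrast, a one-line divisibility observation ($b_k^s\mid k$ and $t/j\mid a_{t-1}^s\mid a_{k-1}^s$), not a valuation estimate over the whole window.
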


\begin{demos}
\begin{enumerate}
 \item If $b_t^s = 1$, then it means $p^cq^d|a_{t-1}^s$ by \eqref{bn_alts}. We distinguish two cases now. If $q^{d+1}|a_{qt-1}^s$, then $p^cq^{d+1}|a_{pqt-1}^s$, and in consequence $b_{pqt}^s$ can only be $1$ or $p$. If not, then we have $p^cq^d|a_{qt-1}^s$ and $q^{d+1}\nmid a_{qt-1}^s$, and hence $a_{qt}^s = (q+1)a_{qt-1}^s$, which implies $p^{c+1}q^d|a_{pqt-1}^s$. Thus $b_{pqt}^s$ can only be $1$ or $q$.
 \item If $b_t^s = p$, again by \eqref{bn_alts} we obtain $p^{c-1}q^d|a_{t-1}^s$ and $p^c\nmid a_{t-1}^s$. Since $p<q$, we have $t<p^{c-1}q^{d+1}=tq/p$. We distinguish two cases once more. If $q^{d+1}$ divides $a_{tq/p-1}^s$, then it divides $a_{qt-1}^s$ too. As a consequence, $b_{qt}^s$ can only be $1$ or $p$. If $q^{d+1}$ does not divide $a_{tq/p-1}^s$ (and recalling that $q^d|a_{tq/p-1}^s$), $b_{tq/p}^s$ is $q$ and $a_{tq/p}^s=(q+1)a_{tq/p-1}^s$. This implies $p|a_{tq/p}^s$ and $p|a_{pqt-1}^s$, from which $b_{pqt}^s$ can only be $1$ or $q$.
 \item The last case is $b_t^s = q$. As before, from it we obtain $p^cq^{d-1}|a_{t-1}^s$ and $q^d\nmid a_{t-1}^s$. But necessarily $a_t^s = (q+1)a_{t-1}^s$, and since $q\equiv -1\pmod{p}$, we conclude $p^{c+1}|a_{t}^s$ and $b_{pt}^s$ can only be $1$ or $q$.
 \item Finally, let us suppose $b_t^s = j\in\{1,p,q\}$. As every $b_n^s$ is a divisor of $n$, if we want to find $k$ such that $k\geq t$ and $b_k^s = m$, the least possible candidate is $t\cdot (pq/j)$. This proves the fourth and last point.
\end{enumerate}
\end{demos}

\begin{demos}[Proof of Theorem \ref{goodbad}]
We distinguish two cases. First, let us assume that for a particular $m$ there exist $p$ and $q$ as in the statement, that we can write in the form $pqm_0$ after Lemma \ref{goodlemma}. Then it is a matter of combining the four properties of Proposition \ref{goodprop} and apply induction. For any $s\geq 1$, clearly $b_{pq}$ can only be $1$, $p$ or $q$ (if $q$ divides $a_{q-1}^s$, then $q\nmid b_{pq}^s$; and if not, then $a_q^s = (q+1)a_{q-1}^s$ and then $p|a_{pq-1}^s$ and $p\nmid b_{pq}^s$). We are now in one of the three possibilities, for $c=d=1$, and fall again in one of them, for $(c,d)\in\{(1,2),(2,2),(2,1)\}$. Iterating this process, we obtain an infinite sequence $\{(c_j,d_j)\}_{j\geq 1}$, with $c_j \leq c_{j+1}\leq c_{j}+1$ and $d_j \leq d_{j+1}\leq d_{j}+1$. And for every $n\geq 1$, there is a single $j$ such that $p^{c_j}q^{d_j}m_0\leq n < p^{c_{j+1}}q^{d_{j+1}}m_0$. By the fourth point, $b_n^s \neq m$.

\

The last case is when no pair $(p,q)$ satisfies the conditions of Theorem \ref{goodbad} for a fixed $m$. Then we take
\[
 S = \{1\leq l\leq m:\mcd{l}{m}=1\},\qquad\text{and}\quad s := \prod_{l\in S} l.
\]
Now let $p_1$, $\ldots$, $p_k$ be the prime divisors of number $m$. We employ the following sets:
 \[
  P_j = \{1\leq n\leq m:n\equiv -1\pmod{p_j}\}, \qquad P = \bigcup_{j=1}^k P_j,
 \]
 \[
  \widetilde{P} = \{1\leq kp\leq m:k\vartriangleleft m,p\in P\}, \qquad\text{and}\quad \widehat{P} := \{1\leq kp\leq m:k\in\zz,p\in P\}.
 \]

Note that always $1\not\in P\subset \widetilde{P}\subset\widehat{P}$. Since clearly $m$ and $s$ are coprime, it is enough to prove $b_n^s \not\in P$ for $2\leq n<m$ because in that case, by \eqref{abb}, $m$ and $a_{m-1}^s$ are coprime too and then $b_m^s=m$ by \eqref{bn_alts}. We shall prove this by complete induction. For the first step we recall that $s$ is an odd number, and hence $b_2^s = 1\not\in P$ by Proposition \ref{prop_ps}. Now, for a given $2\leq n<m$, let us suppose that $b_j^s \not\in P$ for $j<n$, which implies that $n$ and $a_{n-1}^s$ are coprime. Using once more that $b_n^s$ is a divisor of $n$, we can restrict ourselves to the case $n\in \widehat{P}$; in other words, we can assume $n$ to be a multiple of an element of $P$. Now there are two possibilities. If $n$ is in $S$, then by \eqref{bn_alts}, $b_n^s$ is $1$, which is not contained in $\widehat{P}$. Otherwise, we have $n\in \widehat{P}\setminus S$. Let us call $n_0$ the greatest multiple of $n$ such that $n_0$ and $m$ are coprime and write $n=n_0n_1$. Since $n\not\in L$, this clearly implies $n_0\in L$ and $1<n_1\ntriangleleft m$. Then
\[
 b_n^s = \frac{n_0n_1}{\mcd{n_0n_1}{a_{n-1}^s}} = n_1.
\]
Finally, every possible prime divisor of $n_1$ is contained in $\{p_1,\ldots,p_k\}$. But then there cannot exist $j$ such that $n_1\equiv -1\pmod{p_j}$, since $n_1$ is a divisor of a present number, and hence a present number too by Corollary \ref{sonofbad}.
\end{demos}

\begin{table}
\[
 \begin{array}{|c|c|c|}
 \hline
 m & p^k & s \\
 \hline
 9 & 3^2 & 2\cdot 5 \\
 \hline
 25 & 5^2 & 19 \\
 \hline
 27 & 3^3 & 2\cdot 5\cdot 11\cdot 17\cdot 23 \\
 \hline
 49 & 7^2 & 13\cdot 41 \\
 \hline
 81 & 3^4 & 2\cdot 5\cdot 11\cdot 17\cdot 23\cdot 29\cdot 41\cdot 47\cdot 53\cdot 59\cdot 71 \\
 \hline
 121 & 11^2 & 43\cdot 109 \\
 \hline
 125 & 5^3 & 19\cdot 29\cdot 59\cdot 79\cdot 89\cdot 109 \\
 \hline
 169 & 13^2 & 103 \\
 \hline
 243 & 3^4 & 2\cdot 5\cdot 11\cdot 17\cdot 23\cdot 29\cdot 41\cdot 47\cdot 53\cdot 59\cdot 71\cdot 83\cdot 89\cdot 101\cdot 107\cdot 113\cdot 131\cdot  \\
 & & \cdot 137\cdot 149\cdot 167\cdot 173\cdot 179\cdot 191\cdot 197\cdot 227\cdot 233\cdot 239\\
 \hline
 289 & 17^2 & 67\cdot 101\cdot 271 \\
 \hline
 343 & 7^3 & 13\cdot 41\cdot 83\cdot 97\cdot 139\cdot 167\cdot 181\cdot 223\cdot 251\cdot 293\cdot 307\\
 \hline
 361 & 19^2 & 37\cdot 113\cdot 151\cdot 227\\
 \hline
 529 & 23^2 & 137\cdot 229\cdot 367\\
 \hline
 625 & 5^4 & 19\cdot 29\cdot 59\cdot 79\cdot 89\cdot 109\cdot 139\cdot 149\cdot 179\cdot 199\cdot 229\cdot 239\cdot 269\cdot 349\cdot 359\cdot \\
 & & 379\cdot 389\cdot 409\cdot 419\cdot 439\cdot 449\cdot 479\cdot 499\cdot 509\cdot 569\cdot 599\cdot 619 \\
 \hline
 729 & 3^5 & 2\cdot 5\cdot 11\cdot 17\cdot 23\cdot 29\cdot 41\cdot 47\cdot 53\cdot 59\cdot 71\cdot 83\cdot 89\cdot 101\cdot 107\cdot 113\cdot 131\cdot  \\
 & & \cdot 137\cdot 149\cdot 167\cdot 173\cdot 179\cdot 191\cdot 197\cdot 227\cdot 233\cdot 239 \cdot 251\cdot 257\cdot 263\cdot \\
 & & \cdot 269\cdot 281\cdot 293\cdot 311\cdot 317\cdot 347\cdot 353\cdot 359\cdot 383\cdot 389\cdot 401\cdot 419\cdot 431\cdot  \\
 & & \cdot 443 \cdot 449\cdot 461\cdot 467\cdot 479\cdot 491\cdot 503\cdot 509\cdot 521\cdot 557\cdot 563\cdot 569 \cdot 587 \cdot\\
 & & \cdot 593\cdot 599\cdot 617\cdot 641\cdot 647\cdot 653\cdot 659\cdot 677\cdot 683\cdot 701\cdot 719\\
 \hline
 841 & 29^2 & 173\cdot 347\cdot 463\cdot 521\cdot 811\\
 \hline
 961 & 31^2 & 61\cdot 433\cdot 557\cdot 619\cdot 743\cdot 929\\
 \hline
 1331 & 11^3 & 43\cdot 109\cdot 131\cdot 197\cdot 241\cdot 263\cdot 307\cdot 373\cdot 439\cdot 461\cdot 571\cdot 593\cdot 659\cdot 769\cdot \\
 & & \cdot 857\cdot 967\cdot 1033\cdot 1187\cdot 1231\cdot 1297\cdot 1319\\
 \hline
 1369 & 37^2 & 73\cdot 443\cdot 739\cdot 887\cdot 1109\\
 \hline
 \vdots & \vdots & \vdots \\
 \hline
 \end{array}
\]
\caption{Non trivial powers of odd primes, and the least $s$ such that $b_m^s = m$.}
\label{tablan}
\end{table}

\begin{table}
\[
 \begin{array}{|c|c|c|c|c|}
 \hline
 m & \prod p_j^{\alpha_j} & \text{G/B} & p,q & s \\
  \hline
 15 & 3\cdot 5 & \text{Good} & 5\equiv -1\,(3) & \\
  \hline
 21 & 3\cdot 7 & \text{Bad} & & 2\cdot 5\cdot 11\cdot 13\cdot 17 \\
  \hline
 33 & 3\cdot 11 & \text{Good} & 11\equiv -1\,(3) & \\
  \hline
 35 & 5\cdot 7 & \text{Bad} & & 13\cdot 19\cdot 29 \\
  \hline
 39 & 3\cdot 13 & \text{Bad} & & 2\cdot 5\cdot 11\cdot 17\cdot 23\cdot 29\\
  \hline
 45 & 3^2\cdot 5 & \text{Good} & 5\equiv -1\,(3) & \\
  & & & 3^2\equiv -1\,(5) & \\
  \hline
 51 & 3\cdot 17 & \text{Good} & 17\equiv -1\,(3) & \\
  \hline
 55 & 5\cdot 11 & \text{Bad} & & 19\cdot 29\cdot 43 \\
  \hline
 57 & 3\cdot 19 & \text{Bad} & & 2\cdot 5\cdot 11\cdot 17\cdot 23\cdot 29\cdot 37 \cdot 41\cdot 47\cdot 53 \\
  \hline
 63 & 3^2\cdot 7 & \text{Good} & 3^3\equiv -1\,(7) & \\
  \hline
 65 & 5\cdot 13 & \text{Good} & 5^2\equiv -1\,(13) & \\
  \hline
 69 & 3\cdot 23 & \text{Good} & 23\equiv -1\,(3) & \\
  \hline
 75 & 3\cdot 5^2 & \text{Good} & 5\equiv -1\,(3) & \\
  & & & 3^2\equiv -1\,(5) & \\
  \hline
 77 & 7\cdot 11 & \text{Bad} & & 13\cdot 41\cdot 43 \\
  \hline
 85 & 5\cdot 17 & \text{Bad} & & 19\cdot 29\cdot 59\cdot 67\cdot 79 \\
  \hline
 87 & 3\cdot 29 & \text{Good} & 29\equiv -1\,(3) & \\
  \hline
 91 & 7\cdot 13 & \text{Good} & 13\equiv -1\,(7) & \\
  \hline
 93 & 3\cdot 31 & \text{Bad} & & 2\cdot 5\cdot 11\cdot 17\cdot 23\cdot 29 \cdot 41\cdot 47\cdot 53\cdot \\
  & & & &\cdot 59\cdot 61\cdot 71\cdot 83\cdot 89 \\
  \hline
 95 & 5\cdot 19 & \text{Good} & 19\equiv -1\,(5) & \\
  \hline
 99 & 3^2\cdot 11 & \text{Good} & 11\equiv -1\,(3) & \\
  \hline
 105 & 3\cdot 5\cdot 7 & \text{Good} & 3^2\equiv -1\,(5) & \\
  & & & 5\cdot 7\equiv -1\,(3) & \\
  & & & 7^2\equiv -1\,(5) & \\
  \hline
 \vdots & \vdots & \vdots & \vdots & \vdots \\ 
  \hline
 \end{array}
\]
\caption{First odd integers with at least two different prime divisors. If they are present, the least $s$ such that $b_m^s=s$. If they are absent, all the counterexample pairs $(p,q)$.}
\label{tablad}
\end{table}


\section{Examples and tables}\label{examples}

As we have just seen, we know for sure that there are infinitely many values of $s$ such that $\{a_n^s\}$ contains composite numbers. However, very strong divisibility conditions are needed in order to find such numbers, that we called {\it present} (many composite numbers can never show up, as we stated on Theorem \ref{goodbad}). So if we choose a random value for $s$, the odds say that probably that sequence is clean. Let us begin by taking a look at some examples of absent and present integers. Every power of a prime is present, the first ones are in Table \ref{tablan}. Note that, for any fixed present number $m$, the value $s$ that we get in the proof of the theorem is not necessarily the smallest possible. In the case $m=p^k$, actually it is enough to take the product of the primes contained in $P$, and that is what we do in our examples.

If $m$ has more than one prime divisor, then $m$ can be absent or present, as Definition \ref{deff} states. See Table \ref{tablad}. The first absent integers are $15$, $33$, $45$, $51$, $63$, $65$, $69$, $75$, $87$ and $91$. We stated that we can never find these integers in any generalized sequence. The rest of them are present, and by the proof of Theorem \ref{goodbad}, we can find suitable values of $s$ such that $b_m^s=m$. Again, it is not necessary to take so many factors to get $s$, but taking just the primes in $P$ is not enough. For instance, taking $s=3\cdot 13\cdot 19$, we need $43|s$ to get $b_m^s=m$, although $43$ is not congruent to $-1$ modulo $3$, $13$ or $19$ (the reason for this is the fact that $3\cdot 43$ is congruent to $-1$ modulo $13$).

For larger values of $m$, usually more numbers are expected to be absent, as having more different factors creates more chances. However, picking products of two different large primes, we can always find arbitrarily large present numbers. It is not easy to describe all the pairs $(n,s)$ such that a present number $m$ satisfies $b_n^s = m$; if $n$ is greater than $m$, there are endless possibilities adding and removing factors from $s$.

On the other hand, given a specific value of $s$, is there a way to check if the sequence $\{b_n^s\}$ contains present numbers? Again, there is not a clean answer. As we discussed before, the number of times that a prime $p$ divides $a_n^s$ depends not only on the primes $q$ less than $n$ and such that $q\equiv -1\pmod{p}$, but on the factors $b_j^s$ with $j\leq n$ and $b_j^s\equiv -1\pmod{p}$ (and also on the factors of $s$ itself). If $s$ is not a huge number and we can manually check $O(s)$ steps in the sequence $b_n^s$, then are able to say if there are present numbers, or if we can establish the original conjecture for this particular value of $s$.

\

Finally, let us go back to the original sequence, \eqref{an}, and say a few words about Conjecture \ref{conj}. As we can see on Table \ref{tablao}, $\{a_n\}$ grows very fast. Also, by \eqref{acumulador}, on every step it is multiplied by $(1+b_n)$ and thus it plays the role of an accumulator of $\{b_n\}$. According to the proof of Proposition \ref{suff_cond}, what $b_n$ needs to be $1$ or prime is every prime factor of $n$ to appear on $a_n$ at least the same number of times, with the single possible exception of one of them. 

\begin{table}
\[
 \begin{array}{|c|c|c|c|c|}
 \hline
 n & a_{n-1} & \mcm{n}{a_{n-1}} & b_n & a_n/a_{n-1} \\
  \hline
 2 & 1 & 2a_{1} & 2 & 3 \\
  \hline
 3 & 3 & a_{2} & 1 & 2 \\
  \hline
 4 & 2\cdot3 & 2a_{3} & 2 & 3 \\
  \hline
 5 & 2\cdot3^2 & 5a_{4} & 5 & 2\cdot 3 \\
  \hline
 6 & 2^{2}\cdot3^3 & a_{5} & 1 & 2 \\
  \hline
 7 & 2^{3}\cdot3^3 & 7a_{6} & 7 & 2^3 \\
  \hline
 8 & 2^{6}\cdot3^3 & a_{7} & 1 & 2 \\
  \hline
 9 & 2^{7}\cdot3^3 & a_{8} & 1 & 2 \\
  \hline
 10 & 2^{8}\cdot3^3 & 5a_{9} & 5 & 2\cdot 3 \\
  \hline
 11 & 2^{9}\cdot3^4 & 11a_{10} & 11 & 2^2\cdot 3 \\
  \hline 
 12 & 2^{11}\cdot3^5 & a_{11} & 1 & 2 \\
  \hline
 13 & 2^{12}\cdot3^5\cdot7 & 13a_{12} & 13 & 2\cdot7 \\
  \hline
 14 & 2^{13}\cdot3^5\cdot7 & a_{13} & 1 & 2 \\
  \hline
 15 & 2^{14}\cdot3^5\cdot7 & 5a_{14} & 5 & 2\cdot3 \\
  \hline
 16 & 2^{15}\cdot3^6\cdot7 & a_{15} & 1 & 2 \\
  \hline
  \vdots & \vdots & \vdots & \vdots & \vdots \\
  \hline
 \end{array}
\]
\caption{A deeper explanation of how $a_n$ and $b_n$ do behave.}
\label{tablao}
\end{table}

\begin{table}
\[
 \begin{array}{|c|c|c|c|}
 \hline
 n & a_{n-1} & b_n & a_n/a_{n-1} \\
 \hline
 159 & 2^{212} \cdot 3^{54} \cdot 5^{14} \cdot 7^{14} \cdot 11^{5} \cdot {13} \cdot 17^{3} \cdot 19^{3} \cdot 23 \cdot 31^2 \cdot 37^2 \cdot 79 & 53 & 2\cdot 3^3 \\
  \hline
   160 & 2^{213} \cdot 3^{57} \cdot 5^{14} \cdot 7^{14} \cdot 11^{5} \cdot {13} \cdot 17^{3} \cdot 19^{3} \cdot 23 \cdot 31^2 \cdot 37^2 \cdot 79 & 1 & 2 \\
  \hline
   161 & 2^{214} \cdot 3^{57} \cdot 5^{14} \cdot 7^{14} \cdot 11^{5} \cdot {13} \cdot 17^{3} \cdot 19^{3} \cdot 23 \cdot 31^2 \cdot 37^2 \cdot 79 & 1 & 2 \\
  \hline
   162 & 2^{215} \cdot 3^{57} \cdot 5^{14} \cdot 7^{14} \cdot 11^{5} \cdot {13} \cdot 17^{3} \cdot 19^{3} \cdot 23 \cdot 31^2 \cdot 37^2 \cdot 79 & 1 & 2 \\
  \hline
   163 & 2^{216} \cdot 3^{58} \cdot 5^{14} \cdot 7^{14} \cdot 11^{5} \cdot {13} \cdot 17^{3} \cdot 19^{3} \cdot 23 \cdot 31^2 \cdot 37^2 \cdot 79 & 163 & 2^2\cdot 41 \\
  \hline
  164 & 2^{218} \cdot 3^{58} \cdot 5^{14} \cdot 7^{14} \cdot 11^{5} \cdot {13} \cdot 17^{3} \cdot 19^{3} \cdot 23 \cdot 31^2 \cdot 37^2 \cdot 41 \cdot 79 & 1 & 2 \\
  \hline
  \vdots & \vdots & \vdots & \vdots \\
  \hline
 \end{array}
\]
\caption{Values of $a_{n-1}$ and $b_n$ for $159\leq n\leq 164$.}
\label{tablak}
\end{table}

\begin{figure}
\includegraphics[scale=0.4]{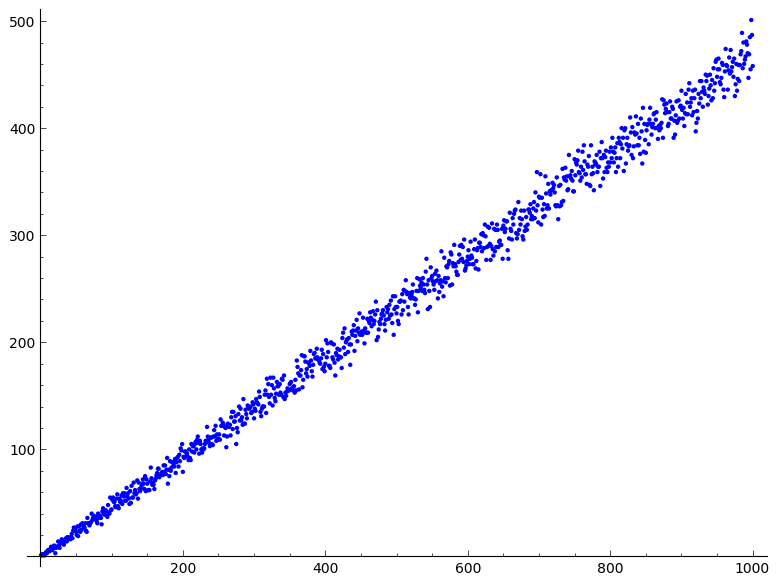}
\centering
\caption{The first $1000$ primes $p_n$ vs. $\pi(p_n^2;p_n,p_n-1)$, which is never $0$ in this range.}
\label{RR}
\end{figure}

\begin{table}
\[
 \begin{array}{|l||l|l|l|l|l|}
 \hline
  p\backslash k & 2 & 3 & 4 & 5 & 6 \\
  \hline
  \hline
   2 & 1 & 3 & 5 & 10 & 17 \\
  \hline
   3 & 2 & 5 & 11 & 27 & 67 \\
  \hline
   5 & 1 & 6 & 27 & 110 & 450 \\
  \hline
   7 & 2 & 11 & 62 & 327 & 1849 \\
  \hline
   11 & 2 & 21 & 171 & 1487 & 13295 \\
  \hline
   13 & 1 & 27 & 252 & 2603 & 28150 \\
  \hline
   17 & 3 & 41 & 502 & 6782 & 94708 \\
  \hline
   19 & 4 & 52 & 687 & 10128 & 157635 \\
  \hline
 \end{array}
\]
\caption{First values of $\{b_n^s\}$ for $1\leq s\leq 11$.}
\label{tablaa}
\end{table}

According to Conjecture \ref{suff_conj}, we expect a prime $p$ to appear at least $k$ times before getting to $a_{p^{k+1}}$. Since on every step we are multiplying by $(1+b_n)$, in fact primes appear on $a_n$ even more times than expected, specially the smallest. When $n$ grows (Table \ref{tablak}), it seems like we have much more factors than we need. Anyway, a result like the one stated on the Conjecture is needed if we want to prove that there are no composite numbers on the sequence. As we already saw, there is a very strong numeric evidence in order to suspect that it is true. By looking at Table \ref{tablaa}, in fact it is clear that the case $k=2$ is the sharpest one. And for it, the number of primes congruent with $p-1$ and less then $p^2$ grows with $p$ (see Figure \ref{RR}).

\

\paragraph*{Acknowledgments:}The author thanks Benoit Cloitre for pointing out the reference \cite{markus} and an anonymous referee for his/her constructive comments.


\end{document}